\title{Variation of the canonical height for a family of polynomials}
\author{Patrick Ingram}
\address{Department of Pure Mathematics, University of Waterloo}
\email{pingram@math.uwaterloo.ca}
\date{March 22, 2010; minor changes April 23, 2011}
\thanks{The author's research is supported by a Discovery Grant from NSERC of Canada.}
\newcommand{\QQ}{\mathbb{Q}}
\newcommand{\ZZ}{\mathbb{Z}}
\newcommand{\CC}{\mathbb{C}}
\newcommand{\RR}{\mathbb{R}}
\newcommand{\PP}{\mathbb{P}}
\renewcommand{\AA}{\mathbb{A}}
\newcommand{\Ocal}{\mathcal{O}}
\newcommand{\basin}{\mathcal{B}}
\newcommand{\adele}{\mathbf{A}}
\newcommand{\Supp}{\operatorname{Supp}}
\newcommand{\Spec}{\operatorname{Spec}}
\newcommand{\Gal}{\operatorname{Gal}}
\newcommand{\ord}{\operatorname{ord}}
\newcommand{\MOD}[1]{~(\textup{mod}~#1)}
\newcommand{\h}{\hat{h}}
\newcommand{\hl}{\hat{\lambda}}
\newcommand{\Pic}{\operatorname{Pic}}
\newcommand{\disc}[3]{\mathbb{D}_{#1}(#2; #3)}
\newcommand{\ann}[4]{\mathbb{D}_{#1}(#2; #3, #4)}
\newcommand{\G}{\mathcal{G}}
\newcommand{\pf}{\mathfrak{p}}
\newcommand{\mf}{\mathfrak{m}}
\renewcommand{\epsilon}{\varepsilon}
\newcommand{\Div}{\operatorname{Div}}
\newcommand{\llbracket}{[\hspace{-1.5pt}[}
\newcommand{\rrbracket}{]\hspace{-1.5pt}]}
\newcommand{\llp}{(\hspace{-1.5pt}(}
\newcommand{\rrp}{)\hspace{-1.5pt})}
\newtheorem{theorem}{Theorem}
\newtheorem{lemma}[theorem]{Lemma}
\newtheorem{corollary}[theorem]{Corollary}
\theoremstyle{remark}
\newtheorem*{remark}{Remark}
\begin{document}
\begin{abstract}
A theorem of Tate asserts that, for an elliptic surface $E\rightarrow X$ defined over a number field $k$, and a section $P:X\rightarrow E$, there exists a divisor $D=D(E, P)\in\Pic(X)\otimes \QQ$ such that 
\[\h_{E_t}(P_t)=h_D(t)+O(1),\]
where $\h_{E_t}$ is the N\'{e}ron-Tate height on the fibre above $t$.
  We prove the analogous statement for a one-parameter family of polynomial dynamical systems.  Moreover, we compare, at each place of $k$, the local canonical height with the local contribution to $h_D$, and show that the difference is analytic near the support of $D$, a result which is analogous to results of Silverman in the elliptic surface context.
\end{abstract}

\maketitle

\section{Introduction}

Let $k$ be a number field, $X$ a smooth, projective curve over $k$, and $E$ an elliptic curve over the function field $K=k(X)$ with associated N\'{e}ron-Tate height $\h_E$.  If $E$ has good reduction at $t\in X(\overline{k})$, then the fibre $E_t$ is an elliptic curve over $\overline{k}$ with an associated N\'{e}ron-Tate height $\h_{E_t}$.  Given a point $P\in E(K)$, it is natural to ask how the height $\h_{E_t}(P_t)$ varies as a function of the parameter.  If $h$ is a height on $X$ with respect to a divisor of degree 1, then a result of Silverman \cite{silverman} shows that
\begin{equation}\label{silverman}\h_{E_t}(P_t)=\h_E(P)h(t)+o(h(t)),\end{equation}
where $o(h(t))/h(t)\rightarrow 0$ as $h(t)\rightarrow\infty$.  This was  improved by Tate \cite{tate}, who showed that, for some divisor $D\in\Pic(X)\otimes\QQ$, of degree $\h_E(P)$, we have
\begin{equation*}\h_{E_t}(P_t)=h_D(t)+O(1).\end{equation*}
In particular, if $X=\PP^1$, then the error term in \eqref{silverman} can be replaced with something bounded by an absolute constant (depending on $E$ and $P$), while in general Silverman's bound is improved to $O(h(t)^{\frac{1}{2}})$.

Now, let $f\in K(z)$ be a rational function, and $P\in\PP^1(K)$.  There is, associated to $f$, a canonical height $\h_f:\PP^1(\overline{K})\rightarrow\RR$ determined uniquely by the properties
\[\h_f(f(P))=\deg(f)\h_f(P)\qquad\text{and}\qquad\h_f(P)=h(P)+O(1),\]
 and similarly to each specialization $f_t$ at which $f$ has good reduction.
  The analogue of \eqref{silverman} holds again here; Call and Silverman~\cite[Theorem~4.1]{call-silv} have shown that 
\begin{equation}\label{eq:call-silverman}\h_{f_t}(P_t)=\h_f(P)h(t)+o(h(t)).\end{equation}
It is natural to ask if the analogue of Tate's theorem holds in this context.
We show that it does, when $f$ is a polynomial.
\begin{theorem}\label{th:main}
Let $k$, $X$, and $K$ be as above, let $f\in K[z]$, and let $P\in\PP^1(K)$.  Then there is a divisor $D=D(f, P)\in\Pic(X)\otimes\QQ$ of degree $\h_f(P)$ such that
\begin{equation}\label{eq:main bound}\h_{f_t}(P_t)=h_{D}(t)+O(1),\end{equation}
as $t\in X(\overline{k})$ varies, where the implied constant depends only on $f$ and $P$.
 \end{theorem}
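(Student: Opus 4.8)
The plan is to identify the function $t\mapsto\hat h_{f_t}(P_t)$ with a Weil height for an explicit divisor on $X$, by working one place at a time and then patching via the product formula (we may assume $d=\deg f\ge 2$, the remaining cases being vacuous or immediate). Fix a finite extension of $k$ over which $X$, $f$, and $P$ are all defined, and for $t\in X(\overline k)$ of good reduction decompose
\[
\hat h_{f_t}(P_t)=\sum_{v}n_v\,\hat\lambda_{f_t,v}(P_t),
\]
the sum over places $v$ of the field of definition of $t$, where $\hat\lambda_{f_t,v}(z)=\lim_{n\to\infty}d^{-n}\log^+|f_t^n(z)|_v$ is the local canonical height (normalized by the leading coefficient of $f_t$ so that the $v$-terms respect the product formula). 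I would then prove, place by place, that $t\mapsto\hat\lambda_{f_t,v}(P_t)$ agrees with a local Weil function $\lambda_{D,v}(t)$ of a single divisor $D\in\Pic(X)\otimes\QQ$ up to an error that is bounded on $X(\CC_v)$ — indeed analytic near $\Supp D$, the refinement advertised in the abstract. Summing over $v$ then yields $\hat h_{f_t}(P_t)=h_D(t)+O(1)$: the errors vanish for all but the finitely many $v$ at which $f$ or $P$ has bad reduction, and each of the remaining ones is continuous on the compact space $X(\CC_v)$ attached to the proper curve $X$, hence bounded, so the total error is bounded.

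To produce $D$ and organize the local estimates I would pass to the ruled surface $S=X\times\PP^1$, with projections $\pi\colon S\to X$ and $\rho\colon S\to\PP^1$. The polynomial $f\in k(X)[z]$ induces a rational self-map $\Phi\colon S\dashrightarrow S$ over $X$ — a morphism away from a finite set $Z\subset X$ (poles of the coefficients of $f$, zeros of its leading coefficient, and fibres of bad reduction) — and a computation with a common denominator for the coefficients gives $\Phi^*\rho^*\Ocal_{\PP^1}(1)\cong\rho^*\Ocal_{\PP^1}(1)^{\otimes d}\otimes\pi^*\Ocal_X(A)$ for an effective divisor $A$ on $X$ supported on $Z$. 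Since $\Phi$ is the identity on $X$, setting $B=\tfrac1{d-1}A$ and $\mathcal M=\rho^*\Ocal_{\PP^1}(1)\otimes\pi^*\Ocal_X(B)$ produces a genuine eigen-line-bundle, $\Phi^*\mathcal M\cong\mathcal M^{\otimes d}$. The associated Tate-limit height $\hat h_{\mathcal M}(Q)=\lim_n d^{-n}h_{\mathcal M}(\Phi^nQ)$, defined for $Q$ whose forward orbit avoids the finite indeterminacy locus, then satisfies $\hat h_{\mathcal M}(Q)=h_{\mathcal M}(Q)+O(1)$ and $\hat h_{\mathcal M}(\Phi Q)=d\,\hat h_{\mathcal M}(Q)$. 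Now let $C\subset S$ be the image of the section $\sigma\colon t\mapsto(t,P_t)$, so $\Phi^n(\sigma(t))=(t,f_t^n(P_t))$; since $h_{\PP^1}\circ\rho+h_B\circ\pi$ is a Weil height for $\mathcal M$ and thus differs from $h_{\mathcal M}$ by $O(1)$ on all of $S$, the Tate limit along this orbit collapses to $\hat h_{\mathcal M}(\sigma(t))=\lim_n d^{-n}h_{\PP^1}(f_t^n(P_t))=\hat h_{f_t}(P_t)$ for $t\notin Z$. Restricting $\hat h_{\mathcal M}$ to $C\cong X$ and applying functoriality of the height machine gives $\hat h_{f_t}(P_t)=h_D(t)+O(1)$ with $D:=\mathcal M|_C$; and the degree is forced: since $\Phi^{n*}\Ocal_S(X\times\{\infty\})\cong\mathcal M^{\otimes d^n}\otimes\pi^*\Ocal_X(-B)$, pulling back by $\sigma$ shows $\deg\bigl(f^n(P)\colon X\to\PP^1\bigr)=d^n\deg D-\deg B$, so $\deg D=\lim_n d^{-n}\deg(f^n(P))=\hat h_f(P)$.

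The main obstacle is that $\Phi$ is only a rational map, so the manipulations with $\Phi^*$, the identity $h_{\mathcal M}(\Phi Q)=d\,h_{\mathcal M}(Q)+O(1)$, and the convergence of the Tate limit must be justified uniformly, including for points lying over or near the bad locus $Z$. I would handle this either by blowing up $S$ at the finitely many indeterminacy points over $Z$ and running the above argument on the resolution, or — and something like this is needed anyway for the local refinement — by a direct $v$-adic analysis: on $X(\CC_v)\setminus Z$ the truncations $t\mapsto d^{-n}\log^+|f_t^n(P_t)|_v$ converge uniformly on compact subsets, the limit extends across each $t_0\in Z$ with at worst a logarithmic singularity $c_{t_0}\log|t-t_0|_v^{-1}$ of rational coefficient plus an analytic remainder, and the coefficients $c_{t_0}$ — the same for every $v$ — are exactly the coefficients of $D$. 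A secondary point requiring bookkeeping is the behaviour at the finitely many $t$ where $P_t=\infty$ or where the leading-coefficient normalization of $\hat\lambda_{f_t,v}$ contributes; these affect only vertical divisors on $S$ and are absorbed into $\Ocal_X(B)$ and into $D$.
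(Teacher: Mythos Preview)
Your ruled-surface framing is elegant and your identification of $D$ (and of $\deg D=\hat h_f(P)$) is correct, but the argument as written has a genuine gap at exactly the point you flag as ``the main obstacle'': the uniformity of the $O(1)$ bound.  Both of your proposed fixes leave this unaddressed.

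The blow-up route does not close the gap.  A single resolution of indeterminacy yields a morphism $\tilde\Phi\colon\tilde S\to S$, not a self-map, so you cannot iterate on $\tilde S$; iterating the resolution is precisely the construction of a weak N\'eron model, and as the paper discusses (citing Hsia and McMullen), these fail to exist over every place unless the family is isotrivial or Latt\`es.  So invoking ``blow up and run Call--Silverman'' is circular: the general Call--Silverman machinery only gives $h_{\mathcal M}(\Phi Q)=d\,h_{\mathcal M}(Q)+O(1)$ with a uniform constant when $\Phi$ is a morphism, and making it one globally is exactly what you cannot do here.  Relatedly, your earlier assertion that the remaining error terms are ``continuous on the compact space $X(\CC_v)$, hence bounded'' assumes the conclusion: both $\hat\lambda_{f_t,v}(P_t)$ and $\lambda_{D,v}(t)$ have logarithmic poles along $\Supp(D)$, and showing that their difference extends to something bounded is the entire content of the theorem, not a consequence of compactness.

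Your second fix --- direct $v$-adic analysis showing the limit extends across each $t_0\in Z$ with a rational-coefficient logarithmic singularity plus a bounded remainder --- is the right idea, and is what the paper actually does, but the work lies in the ``bounded remainder'' claim.  The paper's mechanism is a B\"ottcher-type coordinate: for each $\beta\in\Supp(D)$ one constructs a formal limit $\mathcal G_\beta=\lim_N\bigl(f^N(P)\,a_d^{-(d^N-1)/(d-1)}\bigr)^{1/d^N}$ in $\widehat{\mathcal O}_{\beta,X}$, proves it is $v$-adically analytic on a punctured disk whose radius is controlled by an $M_k$-divisor (so equals $1$ at almost all places), and then observes that $\hat\lambda_{f_t,v}(P_t)-\lambda_{D,v}(t)=(d-1)^{-1}\log|E_\beta(t)|_v$ for a unit $E_\beta\in\widehat{\mathcal O}_{\beta,X}^*$ on that disk.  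Away from $\Supp(D)$ one needs a separate argument bounding $\hat\lambda_{f_t,v}(P_t)$ by an $M_k$-divisor.  None of this is supplied by the eigenbundle formalism; it is genuinely analytic input that your outline gestures at but does not provide.
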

  The divisor $D(f, P)$ is not hard to define: identifying elements of $K$ with morphisms $X\to \PP^1$, and associating to these the usual pull-back maps from $\Pic(\PP^1)$ to $\Pic(X)$, we may take
 \[D(f, P)=\lim_{N\to\infty}d^{-N}f^N(P)^*(\infty).\]

One immediate application of Theorem~\ref{th:main} is that it allows one to count points on the base for which $\h_{f_t}(P_t)$ is less than a given bound.  It follows from the result of Call and Silverman that for any $B$ and $d$, the quantity
\[N_{f, P}(B, D)=\#\left\{t\in X(\overline{k}):[k(t):k]\leq d\text{ and }\h_{f_t}(P_t)\leq B\right\}\]
 is finite, so long as $\h_f(P)\neq 0$, but nothing stronger than finiteness follows from~\eqref{eq:call-silverman}.  In the case $X=\PP^1$, Theorem~\ref{th:main} combined with a result of Schanuel \cite{schanuel} allows one to deduce that
\[N_{f, P}(B, d)\gg\ll e^{2Bd/\h_f(P)},\]
where the implied constanta depend on $k$, $d$, $f$, and $P$.

Theorem~\ref{th:main} also leads to an improved error term in \eqref{eq:call-silverman}, using an observation due to Lang.
\begin{corollary}\label{cor:main}
Let $k$, $X$, $K$, $f$, and $P$ be as above.  If $h$ is any height on $X$ relative to a divisor of degree 1, then for $t\in X(\overline{k})$ we have
\[\h_{f_t}(P_t)=\h_f(P)h(t)+O\left(h(t)^{\frac{1}{2}}\right),\]
as $h(t)\to\infty$, where the implied constant depends only on $f$ and $P$.
If $X=\PP^1$, then we have the further improvement
\[\h_{f_t}(P_t)=\h_f(P)h(t)+O(1).\]
\end{corollary}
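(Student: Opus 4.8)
The plan is to deduce Corollary~\ref{cor:main} formally from Theorem~\ref{th:main}, using the classical fact, due to Lang, that a Weil height attached to a divisor class of degree zero on $X$ is $O\bigl(h(t)^{1/2}\bigr)$, where $h$ is any height relative to a divisor of positive degree. There should be no genuinely new difficulty here; the content is all in Theorem~\ref{th:main}.

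First I would fix a divisor $D_1$ on $X$ of degree $1$ and set $h=h_{D_1}$; this choice is harmless, since by functoriality of the height machine any two heights relative to degree-one divisors differ by $h_{D_1-D_1'}+O(1)$, with $D_1-D_1'$ of degree zero, hence by $O\bigl(h(t)^{1/2}\bigr)$, which will be absorbed into the error term. Writing $e=\deg D=\h_f(P)$ (the degree being supplied by Theorem~\ref{th:main}), I would decompose $D=e\,D_1+E$ in $\Pic(X)\otimes\QQ$ with $\deg E=0$, so that functoriality gives $h_D(t)=e\,h(t)+h_E(t)+O(1)$.

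Next I would recall why $h_E(t)=O\bigl(h(t)^{1/2}\bigr)$. Choosing a base point yields an Abel--Jacobi embedding $\iota\colon X\hookrightarrow J$ into the Jacobian $J=\Pic^0(X)$; pulling back identifies $E$ with (a rational multiple of) a point $a\in J(\overline{k})$, and $h_E(t)=\langle\iota(t),a\rangle+O(1)$, where $\langle\,\cdot\,,\,\cdot\,\rangle$ is the N\'eron--Tate height pairing on $J$ associated to a symmetric ample $\Theta$, a positive semidefinite symmetric bilinear form. Cauchy--Schwarz gives
\[ |h_E(t)|\leq \langle\iota(t),\iota(t)\rangle^{1/2}\,\langle a,a\rangle^{1/2}+O(1), \]
and $\langle\iota(t),\iota(t)\rangle$ agrees, up to $O(1)$ and a fixed constant factor, with the height of $t$ relative to $\iota^{*}\Theta$ (a divisor of degree $g$, the genus of $X$), hence is $O(h(t))$. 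Since $a$, $J$, $\Theta$, $D_1$ depend only on $k$, $X$ and on $f$, $P$, the implied constants depend only on $f$ and $P$. Combining with Theorem~\ref{th:main},
\[ \h_{f_t}(P_t)=h_D(t)+O(1)=\h_f(P)\,h(t)+O\!\bigl(h(t)^{1/2}\bigr), \]
and, as observed, $h$ may be replaced by an arbitrary degree-one height at the cost of another $O\bigl(h(t)^{1/2}\bigr)$. When $X=\PP^1$ one has $\Pic^0(\PP^1)=0$, so $E=0$ in $\Pic(\PP^1)\otimes\QQ$, whence $h_E(t)=O(1)$ and all degree-one heights on $\PP^1$ agree up to $O(1)$; the error term then sharpens to $O(1)$.

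I do not expect a serious obstacle: the deduction is entirely formal once Theorem~\ref{th:main} is available. The only delicate point is bookkeeping of the implied constants --- checking that the comparison of $\langle\iota(t),\iota(t)\rangle$ with an ample height on $X$, and the value $\langle a,a\rangle$, are genuinely determined by $f$ and $P$ (with $k$, $X$ fixed), and that the estimate is uniform over all $t\in X(\overline{k})$ regardless of $[k(t):k]$, which is automatic since the N\'eron--Tate pairing and the height machine are defined over $\overline{k}$.
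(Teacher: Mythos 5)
Your proof takes essentially the same route as the paper: decompose $D(f,P)$ as $\h_f(P)$ times a degree-one divisor plus a degree-zero divisor, bound the height relative to the degree-zero part by $O\bigl(h^{1/2}\bigr)$, and observe that on $\PP^1$ this degree-zero part is principal so the error collapses to $O(1)$. The only difference is that you re-derive the $O\bigl(h^{1/2}\bigr)$ bound from scratch via the Abel--Jacobi embedding and Cauchy--Schwarz on the Jacobian, whereas the paper simply cites Lang (\emph{Fundamentals of Diophantine Geometry}, Proposition~5.4, p.~115); this is a correct, self-contained expansion of the same argument rather than a different approach.
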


It is perhaps somewhat surprising that an analogue of Tate's theorem can be derived in this context.  The proof of Tate's result relies heavily on both the N\'{e}ron model and group structure of elliptic curves.  Neither of those tools are available in the context of dynamics.  Call and Silverman \cite{call-silv} introduced a notion of weak N\'{e}ron models, which one might hope would help in this context, but Hsia \cite{hsia} has shown, over local fields, that these sometimes fail exist.  Indeed, in the present context, the situation is somewhat more dire.  If a given rational function $f(z)\in K(z)$ admits a weak N\'{e}ron model at every place, then by Theorem~3.1 of \cite{hsia}, the multipliers of the periodic cycles are integral at every place, and hence constant.  In other words, if $\mathcal{M}_d$ is the moduli space of rational functions of degree $d$, and one considers the map $F:X\to \mathcal{M}_d$ the generic fibre of which is $f$, and $\Lambda_N:\mathcal{M}_d\to \AA^m$ is the map taking a rational function to the symmetric functions in the multipliers of its points of period dividing $N$, we have that $\Lambda_N\circ F$ is constant.  A result of McMullen \cite{mcmullen} shows that the map $\Lambda_N$ is finite-to-one, for $N$ large enough, except on Latt\`{e}s maps, and so we have shown that for $f$ to admit a weak N\'{e}ron model at every place, $f$ must either be isotrivial, or a family of Latt\'{e}s maps (i.e., a family coming from an elliptic surface, and hence to which Tate's result applies).
  In light of this,
it would be particularly interesting if one could extend Theorem~\ref{th:main} to apply to all rational functions.  If such a result could be shown, this would give a proof of Tate's theorem which makes no fundamental use of the N\'{e}ron model or the group structure of an elliptic curve, via the machinery of Latt\`{e}s maps.

Tate's results in \cite{tate} are not the end of the story for the variation of canonical heights on elliptic surfaces.  Silverman \cite{var1, var2, var3} showed that the difference $\h_{E_t}(P_t)-h_D(t)$, in addition to being bounded, varies quite regularly as a function of $t$, breaking up into a finite sum of well-behaved functions at various places of $k$.  For example, if
\[E_t:y^2=x^3+t^2(1-t^2)x\quad\text{and}\quad P_t=(t^2, t^2),\]
then the first result of \cite{var1} shows that there is a real-analytic function $F(x)$ defined on a neighbourhood of $0$, such that $F(0)=0$ and, for all $t\in\ZZ$ sufficiently large,
\[\h_{E_t}(P_t)=\h_{E}(P)h(t)+\frac{1}{4}\log 2+F\left(\frac{1}{t^2}\right).\]
%
In the present context, we may also derive results analogous to those of \cite{var1, var2, var3}.

For example, let $k=\QQ$,  $X=\PP^1$, $f_t(z)=z^2+t$, and $P_t=0$.  One can show that  for $t\in\QQ$ in a (real) neighbourhood of infinity, 
\begin{equation}\label{eq:explicit series}
\h_{f_t}(P_t)=\h_f(P)h(t)+\frac{1}{4t}-\frac{1}{8t^2}+\frac{5}{24t^3}-\frac{5}{16t^4}+\frac{17}{40t^5}-\frac{29}{48t^6}+\cdots\end{equation}
(where in this case $\h_f(P)=\frac{1}{2}$, and $h$ is the usual Weil height on $\PP^1$).  
More generally, we derive the following result for quadratic polynomials over $\QQ(t)$.
\begin{theorem}\label{th:quad polys}
Let $f_t(z)=z^2+t$, and let $P_t\in\ZZ[t]$ be a monic polynomial.  Then there exists a  function $F(z)\in \QQ\llbracket z\rrbracket$, convergent in a (real) neighbourhood of $0$ and satisfying $F(0)=0$, such that for all $t\in\QQ$ with $|t|$ sufficiently large,
\[\h_{f_t}(P_t)=\h_f(P)h(t)+F\left(\frac{1}{t}\right).\]
\end{theorem}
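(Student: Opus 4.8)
The plan is to decompose both sides of the asserted identity into a sum of local terms over the places of $\QQ$, to show that the archimedean place is the only one contributing anything non-trivial, and then to extract the power series directly from the B\"{o}ttcher coordinate of $f_t$ at $\infty$. For the local setup I would use that, since $f_t(z)=z^2+t$ is monic, the local canonical height $\hl_{f_t,v}$ at a place $v$ of $\QQ$ is the escape-rate function $G_{f_t,v}(z)=\lim_{n\to\infty}2^{-n}\log^+|f_t^n(z)|_v$, with $\h_{f_t}(P_t)=\sum_v G_{f_t,v}(P_t)$ and $h(t)=\sum_v\log^+|t|_v$. Writing $m=\deg_t P_t\ge 1$, I would compute $D(f,P)=\lim_N 2^{-N}f_t^N(P_t)^*(\infty)$ by hand: each $f_t^N(P_t)\in\ZZ[t]$ is monic of degree $m2^N$, so, viewed as a morphism $X\to\PP^1$, it has a pole only at $t=\infty$, of order $m2^N$; hence $D=m\,(\infty)$ and $\h_f(P)=\deg D=m$, so that $\h_f(P)h(t)=\sum_v m\log^+|t|_v$ and the problem becomes the analysis of $\sum_v\bigl(G_{f_t,v}(P_t)-m\log^+|t|_v\bigr)$, place by place.

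I would next dispose of the finite places. If $p$ is a prime with $|t|_p\le 1$, then $f_t$ has good reduction at $p$ and $P_t\in\ZZ[t]$ is $p$-integral, so $G_{f_t,p}(P_t)=\log^+|P_t|_p=0=m\log^+|t|_p$. If $|t|_p>1$, then since $P_t$ is monic of degree $m$ its leading term dominates, giving $|P_t|_p=|t|_p^m>1$; an easy induction then shows $|f_t^n(P_t)|_p=|t|_p^{m2^n}$ for all $n$, whence $G_{f_t,p}(P_t)=m\log|t|_p=m\log^+|t|_p$. In both cases the $p$-adic term vanishes, so for $|t|>1$
\[\h_{f_t}(P_t)-\h_f(P)h(t)=G_{f_t,\infty}(P_t)-m\log|t|,\]
and all arithmetic has disappeared: what remains is a statement about the complex (indeed real) parameter $t$.

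To finish, I would use that for $|t|$ large the point $P_t$ sits deep in the attracting basin of $\infty$ for $f_t$, so $G_{f_t,\infty}(P_t)=\operatorname{Re}\log\varphi_t(P_t)$, where $\varphi_t$ is the B\"{o}ttcher coordinate at $\infty$, given by the convergent product $\varphi_t(z)=z\prod_{n\ge 0}\bigl(1+t/f_t^n(z)^2\bigr)^{2^{-(n+1)}}$. Putting $z=P_t$ and using again that $f_t^n(P_t)\in\ZZ[t]$ is monic of degree $m2^n$, the displayed difference becomes $\operatorname{Re}\psi(t)$ with
\[\psi(t)=\log\frac{P_t}{t^m}+\sum_{n\ge 0}\frac{1}{2^{n+1}}\log\Bigl(1+\frac{t}{f_t^n(P_t)^2}\Bigr).\]
In the variable $w=1/t$ each term is holomorphic near $w=0$ and vanishes there, and since $t/f_t^n(P_t)^2=O(|w|^{m2^{n+1}-1})$ the series converges uniformly on a complex neighbourhood of $w=0$; hence $\psi$ is holomorphic near $0$, $\psi(0)=0$, its Taylor coefficients are rational (because $P_t$ and the $f_t^n(P_t)$ lie in $\ZZ[t]$ while $\log(1+x)$ and $(1+x)^{-2}$ have rational coefficients), and, having real coefficients, $\psi(t)$ is real for real $t$. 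Therefore $\operatorname{Re}\psi(t)=\psi(t)=F(1/t)$ for a single $F\in\QQ\llbracket z\rrbracket$ convergent near $0$ with $F(0)=0$, which is exactly the claim.

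The step I expect to be the main obstacle is this last one: establishing the B\"{o}ttcher product expansion for the family $f_t$ and, above all, controlling its convergence uniformly as $t\to\infty$ over a full complex neighbourhood of $\infty\in\PP^1$ --- this is what promotes an asymptotic expansion into a genuine convergent power series in $1/t$ --- together with the bookkeeping that the coefficients are rational and that the constant term vanishes. The bad-reduction computation at the primes with $|t|_p>1$ is elementary, but it too needs a little care: there $\hl_{f_t,p}$ is not simply $\log^+|\cdot|_p$, and one must check by hand that $P_t$ falls in the region where the two agree.
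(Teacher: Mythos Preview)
Your proof is correct and follows essentially the same route as the paper: decompose into local heights, dispose of the non-archimedean places by the identical integrality/ultrametric argument, and at the archimedean place express $\hl_{f_t,\infty}(P_t)-m\log|t|$ via the B\"{o}ttcher coordinate. The only cosmetic difference is that you write the B\"{o}ttcher coordinate as the explicit telescoping product $z\prod_{n\ge 0}(1+t/f_t^n(z)^2)^{2^{-(n+1)}}$ and argue convergence directly, whereas the paper invokes its general construction $\G_\infty=\lim_N\bigl(f^N(P)\bigr)^{1/d^N}$ and the accompanying analyticity lemma; these are the same object, and your uniform-convergence estimate is the concrete instance of that lemma in this case.
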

Theorem~\ref{th:quad polys} is essentially a special case of a more general theorem, which is analogous to the results of Silverman \cite{var1, var2, var3}.  Roughly speaking, the theorem below says that the difference between $\h_{f_t}(P_t)$ and $h_D(t)$ is given by a sum of real-analytic functions, so long as $t$ is close enough to $\Supp(D)$, on some prescribed set of places of $k$.  The statement of the result is somewhat more involved, however, since the analytic functions depend on which point in $\Supp(D)$ is approached by $t$ at each place.  It should be noted that, since the points in $\Supp(D)$ need not be $k$-rational, the following theorem assumes that we have fixed an extension of each valuation on $k$ to a valuation on $\overline{k}$.  It should also be noted that the height $h_D$ below is a particular height function, although it will be clear from the proof that one can adjust the terms involved to accommodate any suitably well-behaved height.
\begin{theorem}\label{th:variation}
Let $k$,  $X$, $f$, and $P$ be as above.   Then there exists 
 a finite set of places $S\subseteq M_k$, containing all infinite places;
 for each pair $\beta\in \Supp(D)$ and $v\in S$ a neighbourhood $U_{\beta, v}\subseteq X(\overline{k}_v)$ of $\beta$; and
 for each pair $\beta\in \Supp(D)$ and $v\in S$ archimedean, a function $F_{\beta, v}:U_{\beta, v}\to \RR$ which is  real-analytic, with $F_{\beta, v}(\beta)=0$,
 such that for any  $\phi:S\to\Supp(D)$ there exists a $C(\phi)\in\RR$  such that 
\begin{equation*}
\h_{f_t}(P_t)=h_D(t)+C(\phi)+\sum_{v\mid\infty}F_{\phi(v), v}(t)
\end{equation*}
for any $t\in X(k)$ satisfying
 $t\in U_{\phi(v), v}$ for all $v\in S$.
In particular,
\[\h_{f_t}(P_t)=h_D(t)+ C(\phi)+o(1),\]
where $o(1)\to 0$ as $t\to\phi(v)\in\Supp(D)$ in the $v$-adic topology, simultaneously for all $v\in S$.
\end{theorem}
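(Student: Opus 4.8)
The plan is to split both sides of the desired identity into local heights indexed by the places of $k$ and to study their difference place by place. For $v\in M_k$ let $\hl_{f_t,v}$ be the local canonical height of the polynomial $f_t$ over $\overline{k}_v$; for a polynomial one may take $\hl_{f_t,v}(z)=\lim_{N\to\infty}d^{-N}\log^+|f_t^N(z)|_v$ (with absolute values normalized so that $\sum_v\log^+|x|_v$ is the Weil height), and then $\h_{f_t}(z)=\sum_v\hl_{f_t,v}(z)$ for algebraic $z$. As noted before the statement, we are free to fix a convenient decomposition $h_D=\sum_v\lambda_{D,v}$ into local heights; we take $\lambda_{D,v}$ of ``model'' type at the non-archimedean places of the finite set $S$ introduced below, and attached to real-analytic metrics at the archimedean places, so that near any $\beta\in\Supp(D)$, with $u$ a local parameter at $\beta$ and $c_\beta=\ord_\beta(D)>0$, one has $\lambda_{D,v}(t)=-c_\beta\log|u(t)|_v$ plus a function real-analytic (resp.\ locally constant) near $\beta$. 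Writing $\psi_v(t)=\hl_{f_t,v}(P_t)-\lambda_{D,v}(t)$, so that $\h_{f_t}(P_t)-h_D(t)=\sum_v\psi_v(t)$, the proof of Theorem~\ref{th:main} supplies a finite set $S\subseteq M_k$, which we take to contain all archimedean places, with $\psi_v\equiv 0$ for every $v\notin S$; indeed this vanishing is precisely what makes the error term in \eqref{eq:main bound} bounded. It therefore remains to understand $\psi_v$, for $v\in S$, on a neighbourhood of each $\beta\in\Supp(D)$.

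Fix $v\in S$ and $\beta\in\Supp(D)$. For all large $N$ the rational function $f^N(P)\in K$ has a pole of order $m_N$ at $\beta$, with $d^{-N}m_N\to c_\beta$; write $f^N(P)=u^{-m_N}w_N$ with $w_N$ a unit at $\beta$. Substituting into $f^{N+1}(P)=\sum_i a_i\,(f^N(P))^i$ and isolating the leading power of $u$ gives, with $\tilde{a}_d:=a_d/u^{\ord_\beta(a_d)}$ (a unit at $\beta$), a recursion $m_{N+1}=d\,m_N-\ord_\beta(a_d)$ and $w_{N+1}=\tilde{a}_d\,w_N^{d}\bigl(1+u^{m_N}\xi_N\bigr)$ with $\xi_N$ in the local ring at $\beta$. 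On a fixed small neighbourhood $U_{\beta,v}$ of $\beta$ we have $|f_t^N(P_t)|_v=|u(t)|_v^{-m_N}|w_N(t)|_v\to\infty$, so $\log^+$ may be replaced by $\log$ and hence
\[\hl_{f_t,v}(P_t)=-c_\beta\log|u(t)|_v+\lim_{N\to\infty}d^{-N}\log|w_N(t)|_v\qquad(t\in U_{\beta,v}).\]
Telescoping the recursion --- and using that $u(t)^{m_N}\xi_N(t)\to 0$ uniformly on $U_{\beta,v}$ --- shows this limit exists and differs from $d^{-N_0}\log|w_{N_0}(t)|_v$ by a uniformly convergent series with $N$-th term $d^{-(N+1)}\log|\tilde{a}_d(t)\bigl(1+u(t)^{m_N}\xi_N(t)\bigr)|_v$. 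When $v$ is archimedean these terms are harmonic in $t$ on $U_{\beta,v}$, so the limit is real-analytic there; when $v$ is non-archimedean the recursion forces $|w_N(t)|_v=|w_N(\beta)|_v$ for all $N$ once $|u(t)|_v$ is small enough (uniformly in $N$), so the limit is constant on $U_{\beta,v}$.

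Comparing with the shape of $\lambda_{D,v}$ fixed above, we conclude that for $v\in S$ the function $\psi_v$ extends real-analytically across $\beta$ when $v$ is archimedean, and is constant on $U_{\beta,v}$ when $v$ is non-archimedean. For archimedean $v\in S$ set $F_{\beta,v}(t)=\psi_v(t)-\psi_v(\beta)$, so $F_{\beta,v}$ is real-analytic on $U_{\beta,v}$ with $F_{\beta,v}(\beta)=0$. Given $\phi:S\to\Supp(D)$, write $c_{\phi(v),v}$ for the constant value of $\psi_v$ on $U_{\phi(v),v}$ if $v$ is non-archimedean and for $\psi_v(\phi(v))$ if $v$ is archimedean, and put $C(\phi)=\sum_{v\in S}c_{\phi(v),v}$. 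Then for any $t\in X(k)$ with $t\in U_{\phi(v),v}$ for all $v\in S$,
\[\h_{f_t}(P_t)-h_D(t)=\sum_{v\notin S}\psi_v(t)+\sum_{v\in S}\psi_v(t)=C(\phi)+\sum_{v\mid\infty}F_{\phi(v),v}(t),\]
which is the asserted formula; since $F_{\phi(v),v}(t)\to F_{\phi(v),v}(\phi(v))=0$ as $t\to\phi(v)$, the concluding $o(1)$ estimate follows at once.

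The main difficulty is the local estimate near $\beta\in\Supp(D)$: one must control the units $w_N$ uniformly on a single neighbourhood of $\beta$ --- equivalently, show that the recursion $|w_{N+1}|_v=|\tilde{a}_d|_v\,|w_N|_v^{d}$, and its archimedean analogue, governs the $w_N$ uniformly in $N$, including at points where $f$ has bad reduction lying in $\Supp(D)$ --- so that the telescoped series converges and exhibits the clean ``$-c_\beta\log|u(t)|_v$ plus analytic'' form. This uniform control of the escape of the orbit of $P_t$ is the dynamical substitute for Silverman's analysis via the formal group and N\'{e}ron local heights in the elliptic-surface setting.
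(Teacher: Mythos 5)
Your overall architecture matches the paper's exactly: decompose into local heights $\psi_v=\hl_{f_t,v}(P_t)-\lambda_{D,v}(t)$, discard all but finitely many places via the paper's Lemma~\ref{lem:main}, and then, near each $\beta\in\Supp(D)$, peel off the leading power of a uniformizer from $f^N(P)$ so that the residual unit converges. The one substantive deviation is cosmetic: where you telescope $d^{-N}\log|w_N|$, the paper instead extracts $d^N$th roots and constructs a B\"{o}ttcher-type germ $\G_\beta=\lim\bigl(f^N(P)a_d^{-(d^N-1)/(d-1)}\bigr)^{1/d^N}\in K_\beta$ (Lemma~\ref{lem:formal bottcher}), then sets $E_\beta=\G_\beta^{d-1}a_d/g$ and takes a Hensel root of $E_\beta/E_\beta(\beta)$; your $w_N$ and $m_N$ are exactly $w_\beta^{m_N}f^N(P)$ and its pole order, so the objects are the same up to taking roots versus logarithms.

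The genuine gap --- which you identify in your closing paragraph but do not close --- is precisely the uniform control of $w_N$ on a single $v$-adic neighbourhood of $\beta$. Your telescoped series has $N$-th term $d^{-(N+1)}\log|\tilde a_d(t)(1+u(t)^{m_N}\xi_N(t))|_v$ with $\xi_N$ involving the negative powers $w_N^{i-d}$, so to make the series converge uniformly (and hence get a harmonic, real-analytic limit across $\beta$) you must show simultaneously that $w_N(t)$ stays bounded away from zero and that $u(t)^{m_N}\xi_N(t)\to 0$, uniformly in $N$ and in $t$ on a fixed annulus around $\beta$; this fails without some argument, especially when $a_d$ vanishes at $\beta$ or nearby. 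The paper's proof fills exactly this hole in two stages: first a Cauchy estimate in $\widehat{\Ocal}_{\beta,X}$ producing the formal limit (Lemma~\ref{lem:formal bottcher}), and second a Schwarz Lemma argument upgrading formal convergence to uniform $v$-adic convergence on a fixed annulus $\ann{v}{\beta}{\delta_1}{\delta_2}$, treating the case $\ord_\beta(a_d)>0$ separately (Lemma~\ref{lem:analytic bottcher}). Without that step your argument establishes the shape of the answer but not the convergence claim that carries it, so as written the proof is incomplete; the rest of your assembly of $C(\phi)$ and the $F_{\beta,v}$ from the local pieces is correct and agrees with the paper once Lemma~\ref{lem:main} is granted.

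Two small additional imprecisions worth noting: the exponent on $u$ in your recursion is $(d-i)m_N-\ord_\beta(a_d)+\ord_\beta(a_i)$, which need not be $\geq m_N$ for every $i<d$ until one has already iterated $f$ enough times that $f^{N_0}(P)\in\basin_\beta^0(f)$ (the paper builds this normalisation into the choice of $N$ in Lemma~\ref{lem:r-r}); and the paper's choice of $\lambda_{D,v}$ is not "model'' type but $\frac{1}{d^N(d-1)}\log^+|g|_v$ for an explicit morphism $g:X\to\PP^1$ with $g^*(\infty)=d^N(d-1)D$, which is what makes the non-archimedean $\psi_v$ vanish on the nose outside $S$ rather than merely be bounded.
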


\begin{remark}
We will, in fact, prove something stronger.  It turns out that our maps $F_{\beta, v}$ are of the form
\[F_{\beta, v}(t)=\frac{[k_v:\QQ_v]}{[k:\QQ]}\log\left|\tilde{F}_\beta(t)\right|_v\]
for some $\tilde{F}_\beta \in \widehat{\Ocal}_{\beta, X}$, where $\Ocal_{\beta, X}$ is the local ring of $X/k$ at $\beta$, and  $\widehat{\Ocal}_{\beta, X}$ its completion in the local topology.  This \emph{a priori} formal function $\tilde{F}_\beta$ turns out to be $v$-adic analytic at $\beta$ for all $v\in M_k$.    This is noticeably stronger than the statement of Theorem~\ref{th:variation}, as it shows that the real-analytic functions $F_{\beta, v}$ arise from more fundamental analytic functions which depend only on the $\beta\in\Supp(D)$.  It also shows that the power series defining the functions $F_{\beta, v}$ have coefficients in some finite extension of $k$.  Similarly, the constants $C(\phi)$ turn out to have the form
\[C(\phi)=d^{-N}(d-1)^{-1}\sum_{v\in S}\frac{[k_v:\QQ_v]}{[k:\QQ]}\log |c_{\phi(v)}|_v,\]
for some $N\geq 0$, and some values $c_\beta\in k^*$ indexed by $\beta\in \Supp(D)$, which are $v$-units for any $v\not\in S$.  In particular, it follows from the product formula that $C(\phi)=0$ if $\phi$ is constant.
\end{remark}

\begin{remark}
The proof of Theorem~\ref{th:variation} is easily modified to give a similar result for points $t\in X(\overline{k})$,  and we present that (somewhat more complicated) statement below.
Indeed, since the proof of this theorem turns out to be purely local, we could replace the $k$-rational points on $X$ with the points rational over the adele ring $\adele_{\overline{k}}$, and similarly for Theorem~\ref{th:main}.
\end{remark}

Before proceeding, we consider a slightly more revealing example of Theorem~\ref{th:variation}. If $f_t(z)=z^2+t$ and $P_t=7t+t^{-1}$, then our definition above gives $D(f, P)=(0)+(\infty)$.  Let $v_\infty$ and $v_7$, respectively, denote the archimedean and $7$-adic valuations on $\QQ$, and let $\phi:S=\{v_\infty, v_7\}\to\Supp(D)=\{0, \infty\}$.
From the proof of Theorem~\ref{th:variation}, we see that for  $t\in \PP^1(\QQ)$, we have
\[\h_{f_t}(P_t)=h_D(t)+\log|c_{\phi(v_\infty)}|_\infty+\log|c_{\phi(v_7)}|_7+o(1)\]
where $o(1)\to 0$ as $t\to \phi(v)$ in the $v$-adic topologies.
It turns out, in this case, that $c_{\infty}=7$ and $c_{0}=1$.
In particular, as $t\to \infty$ at the archimedean place, and $t\to 0$ at the $7$-adic place, we have
\[\h_{f_t}(P_t)=h_D(t)+\log 7+o(1).\]
In contrast, as $t\to\infty$ in both topologies, we have
\[\h_{f_t}(P_t)=h_D(t)+o(1).\]


\section{Local and global heights}

To begin, we will set down some notation and preliminary results.  Most of the terminology is standard, and can be found, for example, in \cite{call-silv}, \cite{lang}, and \cite{js:ads}, but we will recall the basic notation here.
First of all, let $L$ be a field, and let $v$ be a valuation on $L$.  Then for any polynomial $f(z)\in L[z]$ of degree $d\geq 2$, we define a \emph{local canonical height}
\begin{equation}\label{eq:local height}\hl_{f, v}(z)=\lim_{N\to\infty}d^{-N}\max\{0, \log|f^N(z)|_v\}.\end{equation}
It is perhaps not immediately clear that this limit exists for all $z\in L$.
If 
\[f(z)=a_dz^d+a_{d-1}z^{d-1}+\cdots +a_1z+a_0,\]
with $a_i\in L$ and $a_d\neq 0$, let 
\[\basin_v(f)=\left\{z:|f^N(z)|_v\to\infty\text{ as }N\to\infty\right\}\]
denote the $v$-adic basin of infinity.
Furthermore, let the symbol $(2d)_v$ denote $2d$ if $v$ is archimedean, and 1 otherwise, let 
\[\Lambda_v=\max\left\{\max_{0\leq i<d}\left\{\left|\frac{a_i}{a_d}\right|_v^{1/(d-i)}\right\}, |a_d|_v^{-2/(d-1)},  1\right\},\]
and let
\[\basin_v^0(f)=\left\{z: |z|_v>(2d)_v\Lambda_v \right\}.\]
Note that it is perhaps more natural to replace $|a_d|_v^{-2/(d-1)}$, in the above definition, with $|a_d|_v^{-1/(d-1)}$, but the more restrictive bound is critical later.

Roughly speaking, $\basin_v^0(f)$ will play the r\^{o}le played in the elliptic curve context by $\mathcal{E}^0_v$, the identity component of the fibre of the N\'{e}ron model above $v$.  In other words, $\basin_v^0(f)$ is some domain on which the local heights are particularly well-behaved. 
The following elementary results describe the behaviour of local heights in $\basin_v^0(f)$; similar results appear in \cite{baker-hsia} and \cite{pi-lang}.
\begin{lemma}\label{lem:basins}
For all $z\in L$, the limit defining $\hl_{f, v}(z)\geq 0$ exists, and $\hl_{f,v}(z)>0$ if and only if $z\in\basin_v(f)$.  Furthermore, both $\basin_v(f)$ and $\basin_v^0(f)$ are closed under $f$, and \[\basin_v(f)=\left\{z:f^N(z)\in\basin_v^0(f)\text{ for some }N\geq 0\right\}.\]  Finally, for all $z\in \basin_v^0(f)$ and all $N$, we have
\[c_1\leq \frac{1}{d^N}\log\left|f^N(z)\right|_v-\left(\frac{1-d^{-N}}{d-1}\log|a_d|_v+\log|z|_v\right)\leq c_2,\]
where $c_1=\log\frac{1}{2}$ and $c_2=\log\frac{3}{2}$ if $v$ is archimedean, and $c_1=c_2=0$ otherwise.
In particular,
\[c_1\leq \hl_{f, v}(z)-\left(\frac{1}{d-1}\log |a_d|_v+\log|z|_v\right)\leq c_2.\]
\end{lemma}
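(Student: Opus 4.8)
The plan is to work entirely $v$-adically and reduce everything to a single estimate: that on $\basin_v^0(f)$ the polynomial $f$ roughly multiplies the absolute value $|z|_v$ by $|a_d|_v |z|_v^{d-1}$, up to a bounded multiplicative error. First I would establish the telescoping estimate at the bottom of the statement. For $z$ with $|z|_v > (2d)_v \Lambda_v$, write $f(z) = a_d z^d(1 + \sum_{i<d} (a_i/a_d) z^{i-d})$ and bound the sum: each term has $v$-adic size at most $(|a_i/a_d|_v^{1/(d-i)}/|z|_v)^{d-i} < ((2d)_v)^{-(d-i)} \le (2d)_v^{-1}$ by definition of $\Lambda_v$. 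In the archimedean case there are at most $d$ such terms, each of size $<1/(2d)$, so the correction factor lies in $[\tfrac12,\tfrac32]$; in the nonarchimedean case the ultrametric inequality gives correction factor exactly $1$. Hence $|f(z)|_v = |a_d|_v |z|_v^d \cdot (\text{factor in }[\tfrac12,\tfrac32])$, and in particular $|f(z)|_v \ge |a_d|_v|z|_v^d /2 > (2d)_v \Lambda_v = $ the defining bound for $\basin_v^0(f)$ — here the \emph{more restrictive} exponent $-2/(d-1)$ on $|a_d|_v$ in $\Lambda_v$ is exactly what forces $|a_d|_v |z|_v^{d-1}/2 \ge |a_d|_v \Lambda_v^{d-1}/2 \ge |a_d|_v \cdot |a_d|_v^{-2}/2 \cdot (\dots) \ge 1$, giving $|f(z)|_v \ge |z|_v$ and closure of $\basin_v^0(f)$ under $f$. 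Iterating and taking logarithms, $\tfrac{1}{d^N}\log|f^N(z)|_v - \log|z|_v = \sum_{j=0}^{N-1} d^{-j-1}\big(\log|a_d|_v + (d-1)\cdot \tfrac{1}{d^j}\log|f^j(z)|_v/(d-1) - \dots\big)$; more cleanly, set $u_N = d^{-N}\log|f^N(z)|_v$, so $u_{N}- u_{N-1} = d^{-N}(\log|a_d|_v + e_N)$ with $|e_N|\le \log\tfrac32$ archimedean and $e_N=0$ otherwise, and summing the geometric series $\sum d^{-j}\log|a_d|_v = \tfrac{1-d^{-N}}{d-1}\log|a_d|_v$ yields the claimed two-sided bound with $c_1 = \log\tfrac12$, $c_2=\log\tfrac32$.

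Next I would deduce existence of the limit $\hl_{f,v}(z)$ for all $z\in L$ and the characterization of positivity. On $\basin_v^0(f)$ the sequence $u_N = d^{-N}\log|f^N(z)|_v$ is Cauchy because consecutive differences are $O(d^{-N})$ (by the estimate just proved), so $\hl_{f,v}(z) = \lim u_N$ exists there, and letting $N\to\infty$ in the telescoping bound gives the final displayed inequality $c_1 \le \hl_{f,v}(z) - (\tfrac{1}{d-1}\log|a_d|_v + \log|z|_v) \le c_2$; in particular $\hl_{f,v}(z) > 0$ on $\basin_v^0(f)$, using again that $\tfrac{1}{d-1}\log|a_d|_v + \log|z|_v > 0$ there. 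For general $z\in L$: the key is that the \emph{full} orbit of $z$ either stays forever in the bounded region $\{|w|_v \le (2d)_v\Lambda_v\}$ — in which case $\log|f^N(z)|_v$ is bounded above uniformly, so $\max\{0,\log|f^N(z)|_v\}$ is bounded and $d^{-N}\max\{0,\log|f^N(z)|_v\}\to 0$, giving $\hl_{f,v}(z)=0$ — or else $f^{N_0}(z)\in\basin_v^0(f)$ for some $N_0$, in which case $\max\{0,\log|f^N(z)|_v\} = \log|f^N(z)|_v$ for $N\ge N_0$ and the limit $d^{-N}\log|f^N(z)|_v = d^{-N_0}\cdot d^{-(N-N_0)}\log(f^{N-N_0}(f^{N_0}(z)))_v$ exists by the $\basin_v^0$ case, and is strictly positive. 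This simultaneously proves: the limit always exists; $\hl_{f,v}(z)>0 \iff z\in\basin_v(f)$; and $\basin_v(f) = \{z : f^N(z)\in\basin_v^0(f)\text{ for some }N\}$, since the latter set is precisely the complement of the orbits that stay bounded, and those are exactly the orbits not tending to infinity. Closure of $\basin_v(f)$ under $f$ is then immediate from closure of $\basin_v^0(f)$ under $f$.

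The main obstacle — really the only delicate point — is the bookkeeping with the constant $(2d)_v$ and the exponent $-2/(d-1)$ in $\Lambda_v$, to ensure simultaneously that (i) the $d$ correction terms in the archimedean case are small enough that the multiplicative error stays inside $[\tfrac12,\tfrac32]$, (ii) $\basin_v^0(f)$ is genuinely forward-invariant, and (iii) $\tfrac{1}{d-1}\log|a_d|_v + \log|z|_v$ is strictly positive on $\basin_v^0(f)$ so that the height is strictly positive there (not merely nonnegative). Point (iii) is what dictates the restrictive $|a_d|_v^{-2/(d-1)}$ rather than $|a_d|_v^{-1/(d-1)}$: one needs $\log|z|_v \ge \log\Lambda_v \ge \tfrac{2}{d-1}\log|a_d|_v^{-1} = -\tfrac{2}{d-1}\log|a_d|_v$, which exceeds $-\tfrac{1}{d-1}\log|a_d|_v$ by $\tfrac{1}{d-1}|\log|a_d|_v|$ whenever $|a_d|_v<1$, while if $|a_d|_v\ge 1$ the term $\tfrac{1}{d-1}\log|a_d|_v$ is already nonnegative and $\log|z|_v>\log\Lambda_v\ge 0$ suffices. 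Everything else is a routine geometric-series summation, carried out uniformly in $N$ so that it passes to the limit.
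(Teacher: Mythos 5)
Your proof is correct and takes essentially the same approach as the paper's: you establish that on $\basin_v^0(f)$ the map $f$ multiplies $|z|_v$ by $|a_d|_v|z|_v^{d-1}$ up to a factor in $[\tfrac12,\tfrac32]$ (and exactly $1$ non-archimedean), deduce forward invariance, and telescope the resulting geometric series; your summation of the increments $u_N-u_{N-1}$ is equivalent to the paper's induction on $N$, and your dichotomy between orbits that stay in the bounded region and those entering $\basin_v^0(f)$ gives the characterization of $\basin_v(f)$ and the positivity criterion just as in the paper.

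One correction to your closing commentary, however: the exponent $-2/(d-1)$ in the definition of $\Lambda_v$ is \emph{not} forced by the positivity requirement you label (iii). The weaker exponent $-1/(d-1)$ already gives $\tfrac{1}{d-1}\log|a_d|_v+\log|z|_v>\log(2d)_v\geq 0$ for $|z|_v>(2d)_v\Lambda_v$, and this is enough both for forward invariance ($|a_d|_v|z|_v^{d-1}>(2d)_v^{d-1}\geq 2_v$) and for strict positivity of $\hl_{f,v}$ on $\basin_v^0(f)$, even after subtracting $\log 2$ at archimedean places (since $\log(2d)-\log 2=\log d>0$). As the paper itself points out, the more restrictive exponent is needed only later, in the proof of Lemma~\ref{lem:formal bottcher}, where one bounds $|a_d|_{\mf}^{-(d^N-1)/(d-1)}$ above by $|P|_{\mf}^{(d^N-1)/2}$ to make the sequence $G_N$ Cauchy. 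This does not affect the validity of your argument for the present lemma.
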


\begin{proof}
Let $d=\deg(f)$.
First, we note that for $z\in \basin_v^0(f)$, we have
by hypothesis, 
\begin{equation}\label{eq:basin inequality}(2d)_v|a_i|_v|z|^i_v\leq (2d)_v^{(d-i)}|a_i|_v|z|_v^i< |a_d|_v|z|_v^d.\end{equation}
If $v$ is non-archimedean, this implies
\begin{equation}\label{eq:nonarch basin}|f(z)|_v=|a_d|_v|z|_v^d\geq |z|_v,\end{equation}
whereupon $f(z)\in\basin_v^0(z)$.  On the other hand, if $v$ is archimedean, we have
\[\left|f(z)\right|_v=\left|\sum a_iz^i\right|_v\geq |a_d|_v |z|^d-d\max |a_i|_v|z|^i_v\geq \frac{1}{2}|a_d|_v |z|_v^d\]
by~\eqref{eq:basin inequality}.  It follows again that $|f(z)|_v\geq |z|_v$, and hence $f(z)\in \basin_v^0(f)$.  Thus, in either case, $\basin_f^0(f)$ is closed under $f$.

Now, if $v$ is non-archimedean, then \eqref{eq:nonarch basin} implies
\[|f^N(z)|_v=|a_d|_v^{(d^N-1)/(d-1)}|z|^{d^N}_v\]
by induction, for all $z\in\basin_v^0(f)$.  Since $|z|_v> 1$, we obtain
\[\hl_{f, v}(z)=\lim_{N\to\infty}d^{-N}\log\left(|a_d|_v^{(d^N-1)/(d-1)}|z|^{d^N}_v\right)=\frac{1}{d-1}\log|a_d|_v+\log|z|_v.\]
If, on the other hand, $v$ is archimedean, then \eqref{eq:basin inequality} gives
\[\frac{1}{2}|a_d|_v|z|_v^d\leq |f(z)|_v\leq \frac{3}{2}|a_d|_v|z|_v^d\]
for all $z\in\basin_v^0(f)$, and so by induction,
\[\left(\frac{1}{2}|a_d|_v\right)^{\frac{d^N-1}{d-1}} |z|_v^{d^N}\leq |f^N(z)|_v\leq \left(\frac{3}{2}|a_d|_v\right)^{\frac{d^N-1}{d-1}} |z|_v^{d^N}.\]
Taking logarithms and  limits yields
\[\frac{1}{d-1}\log\frac{1}{2}\leq\hl_{f, v}(z)-\left(\frac{1}{d-1}\log|a_d|_v+\log|z|_v\right) \leq \frac{1}{d-1}\log\frac{3}{2}\]
which, in the worst case $d=2$, is what was claimed.

Now, if $z\not\in \basin_v(f)$, then $|f^N(z)|_v$ is bounded as $N\to\infty$, and so $\hl_{f, v}(z)=0$.  On the other hand, if $z\in\basin_v(f)$
then there is some $N$ with $|f^N(z)|_v>(2d)_v\Lambda_v$, and so we have both $f^N(z)\in \basin_v^0(f)$, and $\hl_{f, v}(z)>0$.
\end{proof}

We now recall the definition of various height functions.
Throughout, $k$ will denote some number field, and $M_k$ will be the standard set of places on $k$.  We will adopt the convention that the valuation $|\cdot|_v$, for each $v\in M_k$, has been extended in some way to $\overline{k}$.  For each $v\in M_k$, we define a local (na\"{i}ve) height on $\PP^1$ by
\[\lambda_v(x)=\max\{0, \log|x|_v\}.\]
The global (na\"{i}ve) height on $\PP^1(k)$ is defined by
\[h(x)=\sum_{v\in M_k}\frac{[k_v:\QQ_v]}{[k:\QQ]}\lambda_v(x).\]
It is easy enough to see that this can be extended to $\overline{k}$ by defining
\[h(x)=\sum_{v\in M_k}\frac{[k_v:\QQ_v]}{[k:\QQ]}\left(\frac{1}{[L:k]}\sum_{\sigma\in\Gal(L/k)}\lambda_v(x^\sigma)\right),\]
where $L\supseteq k$ is any Galois extension containing $x$.
It is, of course, necessary to check that this definition does not depend on the particular Galois extension chosen, but it does not.
We define the canonical height with respect to $f\in k[z]$ by
\[\h_f(x)=\sum_{v\in M_k}\frac{[k_v:\QQ_v]}{[k:\QQ]}\hl_{f, v}(x),\]
and similarly for $\overline{k}$.

We define heights in function fields similarly, although we work over the algebraic closure of the constant field (so that valuations on $K=k(X)$ are the same as valuations on $K\otimes\overline{k}$).  Also, we will denote the valuation corresponding to $\beta\in X(\overline{k})$ by $\ord_\beta$ to avoid confusion with valuations on the constant field $k$.
 For any $z\in K$ and $\beta\in X(\overline{k})$, we define
\[\lambda_\beta(z)=\max\{0, -\ord_\beta(z)\},\]
so that $\lambda_\beta(z)$ is the order of the pole of $z$ at $\beta$, if there is one, and 0 otherwise. For $f\in K[z]$ we define $\hl_{f, \beta}$ as in \eqref{eq:local height}, and set
\[\h_{f}(z)=\sum_{\beta\in X(\overline{k})}\hl_{f, \beta}(z).\]
At this point we can define our divisor $D=D(f, P)\in \Pic(X)\otimes\QQ$, which will simply be
\begin{equation}\label{eq:divisor definition}D(f, P)=\sum_{\beta\in X(\overline{k})}\hl_{f, \beta}(P)(\beta).\end{equation}
This clearly has degree $\h_f(P)$, and is equivalent to the definition of $D(f, P)$ given in the introduction.

In addition to the above heights on $\PP^1$, we will define N\'{e}ron functions, and heights relative to divisors on $X$.  Since we want to claim that the difference $\hl_{f_t, v}(P_t)-\lambda_{D, v}(t)$ is real-analytic in certain neighbourhoods, we need to be fairly specific as to how we define these local heights.  Let $D=D(f, P)$ be as defined above, for a particular $f\in K[z]$ and $P\in K$.  If it should happen that $D=0$, then we will simply define $\lambda_{D, v}(x)=0$ for all $v\in M_k$ and $x\in X(\overline{k}_v)$.  To deal with the case $D\neq 0$, we will employ the following simple lemma.

\begin{lemma}\label{lem:r-r}
With $f$ and $P$ as above, suppose that $D=D(f, P)\neq 0$.   Then there is an $N$ and a morphism $g:X\to\PP^1$ (defined over $k$) such that $d^N(d-1)D=g^*(\infty)$.
\end{lemma}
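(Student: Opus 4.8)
The plan is to exhibit $g$ as a $k$-rational function on $X$ whose divisor of poles is exactly $d^N(d-1)D$ for a suitably large $N$, via Riemann--Roch. First I would record the structure of $D=D(f,P)=\sum_\beta\hl_{f,\beta}(P)(\beta)$. By Lemma~\ref{lem:basins} every coefficient $\hl_{f,\beta}(P)$ is non-negative, and only finitely many are nonzero, so $D$ is an effective divisor with finite support; since $D\neq 0$ its degree $\h_f(P)$ is positive. Moreover $D$ is defined over $k$: because $f\in K[z]$ and $P\in\PP^1(K)$ are defined over $k$, one has $\hl_{f,\beta^\sigma}(P)=\hl_{f,\beta}(P)$ for all $\sigma\in\Gal(\overline{k}/k)$, so the coefficient function $\beta\mapsto\hl_{f,\beta}(P)$ is Galois-invariant.

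Next I would pin down the denominators. For each $\beta\in\Supp(D)$ choose, using Lemma~\ref{lem:basins}, an integer $N_\beta$ with $f^{N_\beta}(P)\in\basin_\beta^0(f)$. Applying the non-archimedean case of that lemma to $z=f^{N_\beta}(P)$ and combining it with the functional equation $\hl_{f,\beta}(f^{N_\beta}(P))=d^{N_\beta}\hl_{f,\beta}(P)$ gives
\[d^{N_\beta}(d-1)\,\hl_{f,\beta}(P)=-\ord_\beta(a_d)-(d-1)\ord_\beta(f^{N_\beta}(P))\in\ZZ,\]
and, since $\basin_\beta^0(f)$ is closed under $f$, the same identity holds with any exponent $N\geq N_\beta$ in place of $N_\beta$. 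Taking $N\geq\max_\beta N_\beta$ therefore makes $E:=d^N(d-1)D$ an effective divisor on $X$ with integer coefficients, still defined over $k$; and enlarging $N$ once more I may assume $\deg E=d^N(d-1)\deg D\geq 2g_X$, where $g_X$ denotes the genus of $X$.

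It then remains to produce a morphism $g:X\to\PP^1$ over $k$ with $g^*(\infty)=E$, that is, a function $g\in K^\times$ with polar divisor $(g)_\infty=E$. I would work inside the Riemann--Roch space $\mathcal{L}(E)$, a $k$-vector space of dimension $\deg E+1-g_X$. A nonzero $g\in\mathcal{L}(E)$ satisfies $(g)_\infty=E$ unless $g\in\mathcal{L}(E-(\beta))$ for some $\beta\in\Supp(E)$, and each $\mathcal{L}(E-(\beta))$ has codimension one in $\mathcal{L}(E)$ (both dimensions being given by Riemann--Roch, since the relevant degrees exceed $2g_X-2$). The union $Z=\bigcup_{\beta\in\Supp(E)}\mathcal{L}(E-(\beta))$ is Galois-stable (the set $\Supp(E)$ is, as $E$ is $k$-rational, and $g\mapsto g^\sigma$ carries $\mathcal{L}(E-(\beta))$ to $\mathcal{L}(E-(\beta^\sigma))$), hence it is a closed subvariety of the affine space $\mathcal{L}(E)$ defined over $k$; and it is a proper subset, since a vector space over the infinite field $\overline{k}$ is not a finite union of proper subspaces. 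Thus $\mathcal{L}(E)\setminus Z$ is a nonempty Zariski-open $k$-subvariety of affine space, and as $k$ is infinite it has a $k$-rational point $g$. This $g$ is a nonconstant element of $K$ with $(g)_\infty=E$, so the associated morphism $g:X\to\PP^1$ is defined over $k$ and satisfies $g^*(\infty)=E=d^N(d-1)D$, as required.

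The step I expect to be the main obstacle is precisely this descent to $k$: the support of $D$, hence of $E$, need not consist of $k$-rational points, so one cannot simply choose $g$ avoiding each subspace $\mathcal{L}(E-(\beta))$ individually over $k$. The point is that the \emph{union} of these subspaces is Galois-stable and therefore cut out over $k$, so that a generic $k$-point of $\mathcal{L}(E)$ does the job. The remaining ingredients---effectivity and finite support of $D$, and the integrality of $d^N(d-1)D$---follow directly from Lemma~\ref{lem:basins}, while the passage from a function with a prescribed polar divisor to a morphism $X\to\PP^1$ is standard for smooth projective curves.
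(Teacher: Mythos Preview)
Your proof is correct and follows the same overall strategy as the paper: use Lemma~\ref{lem:basins} to show that $d^N(d-1)D$ is an integral effective divisor once $N$ is large enough that $f^N(P)\in\basin_\beta^0(f)$ for every $\beta\in\Supp(D)$, enlarge $N$ so that its degree is at least $2g_X$, and then invoke Riemann--Roch.

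The only difference is one of detail. The paper stops at the sentence ``by the Riemann--Roch theorem, there is a morphism $g:X\to\PP^1$ such that $d^N(d-1)D=g^*(\infty)$,'' treating both the existence of a function with \emph{exact} polar divisor $E$ and its $k$-rationality as standard. You unpack this: you identify the bad locus $\bigcup_{\beta\in\Supp(E)}\mathcal{L}(E-(\beta))$ inside $\mathcal{L}(E)$, observe that it is a proper Galois-stable closed subset (hence defined over $k$), and conclude that a $k$-rational point of the complement exists because $k$ is infinite. This is a genuine addition over the paper's treatment and handles precisely the point you flagged as the potential obstacle---that the individual $\beta$'s need not be $k$-rational, so the avoidance must be done at the level of the union. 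Your version is more complete on this step; the paper's is terser but relies on the same underlying argument being known.
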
 

\begin{proof}  First we must show that there is an $N$ with $d^N(d-1)D\in \Div(X)$, under the hypothesis that $D>0$.
For each $\beta\in \Supp(D)$, we have $\hl_{f, \beta}(P)>0$, and so by Lemma~\ref{lem:basins} there is an $N$ such that $f^N(P)\in\basin_v^0(f)$.  For this value of $N$, we have
\[d^N(d-1)\hl_{f, \beta}(P)=(d-1)\hl_{f, \beta}(f^N(P))=(d-1)\log|f^N(P)|_\beta+\log|a_d|_\beta\in\ZZ.\]
  If we choose $N$ large enough that $f^N(P)\in \basin_\beta^0(f)$ for all $\beta\in\Supp(D)$, we have
  \[d^N(d-1)D=\sum_{\beta\in X(\overline{k})}d^N(d-1)\hl_{f, \beta}(P)(\beta)\in \Div(X).\]
 Now, since $D>0$, we may choose $N$ to be large enough so that \[d^N(d-1)\deg(D)\geq 2g(X)\] which ensures, by the Riemann-Roch theorem, that there is a morphism $g:X\to\PP^1$ such that $d^N(d-1)D=g^*(\infty)$.
\end{proof}

Thus in the case $D\neq 0$, we may choose $N$ and $g$ as in Lemma~\ref{lem:r-r}, and set for each $v\in M_k$
\[\lambda_{D, v}(t)=d^{-N}(d-1)^{-1}\max\{0, \log|g(t)|_v\}.\]
The global height is defined by
\[h_{D}(t)=\sum_{v\in M_k}\frac{[k_v:\QQ_v]}{[k:\QQ]}\lambda_{D, v}(t)\]
for $t\not\in\Supp(D)$, and $h_D(t)=0$ for $t\in\Supp(D)$
(and similarly for points $t\in X(\overline{k})$).  By linearity and functoriality of heights (see, e.g., \cite{lang}), the global height $h_D:X(\overline{k})\to\RR$ differs from any other height relative to $D$ by at most a bounded amount.  Everything below transfers over to any suitably well-behaved choice of local heights.

To keep track of bounds which depend on places $v\in M_k$, we will use Weil's notion of an $M_k$-divisor \cite[p.~29]{lang}.  
A \emph{(multiplicative) $M_k$-divisor} is a function $\mathfrak{c}:M_k\rightarrow \RR^+$ such that $\mathfrak{c}(v)=1$ for all but finitely many $v\in M_k$, and such that for each non-archimedean $v$, $\mathfrak{c}(v)=|\alpha|_v$ for some $\alpha\in k^*$.
It is clear that the $M_k$-divisors form a group under pointwise multiplication, and that the pointwise maximum or minimum of two $M_k$-divisors is again an $M_k$-divisor.

Additionally, given a  place $v\in M_k$, a point $\beta\in X(\overline{k})$, and a function $u_\beta\in\overline{k}(X)$,  vanishing only at $\beta$, we will set
\[\disc{v}{\beta}{\epsilon}=\left\{t\in X(\overline{k}_v):|u_\beta(t)|^{1/\ord_\beta(u_\beta)}_v<\epsilon\right\}\]
and
\[\ann{v}{\beta}{\delta}{\epsilon}=\left\{t\in X(\overline{k}_v):\delta<|u_\beta(t)|^{1/\ord_\beta(u_\beta)}_v<\epsilon\right\}.\]
 Note that these sets depend on the choice of $u_\beta$, but for a different choice the set will agree at all but finitely many places.


\section{Analytic properties}

The proofs of Theorem~\ref{th:main} and Theorem~\ref{th:variation} are, not surprisingly, largely analytic in nature.  Here we lay some of the groundwork.  In this section, we will typically take $L$ to be the number field $k$, with $M_L=M_k$ a set of places which have all been extended to $\overline{L}$ in some way.  For purely local results, however, we may also take $L$ to be a any field with a valuation $v$, in which case we take $M_L=\{v\}$, and interpret an $M_L$-divisor as simply a function $\{v\}\to \RR^+\cap v(L)$.  

Throughout, $X/L$ will be a smooth projective curve, and $K=L(X)$ its function field.  For each $\beta\in X(\overline{L})$, we fix a uniformizer $w_\beta\in K$, and let $K_\beta$ denote the completion of $K$ with respect to $\ord_\beta$.  As usual, $\Ocal_{\beta, X}\subseteq K$ will denote the subring consisting of elements regular at $\beta$, and similarly for $\widehat{\Ocal}_{\beta, X}\subseteq K_\beta$.  Note that there is a natural isomorphism $\widehat{\Ocal}_{\beta, X}\cong L\llbracket w_\beta\rrbracket$, and we
will associate elements of $\widehat{\Ocal}_{\beta, X}$ with their series representations, and similarly for $K_\beta$.  The element $g\in\widehat{\Ocal}_{\beta, X}$ is \emph{$v$-adic analytic} if the corresponding series converges on the disk $\disc{v}{\beta}{\epsilon}$, for some $\epsilon>0$, and an element in $K_\beta$ is analytic if it is the quotient of two analytic elements (note that such a function might have a pole at $\beta$).
 Similarly, if  $v\in M_L$ is non-archimedean, then $\Ocal_{v, L}$ will denote the ring of $v$-adic integers, defined by $\{x\in L:|x|_v\leq 1\}$.  
 
 For the remainder of the section, we will fix $f\in K_\beta[z]$ and $P\in K_\beta$, such that $P\in\basin^0_{\ord_\beta}(f)$, and set $0<m=-\ord_\beta(P)$.
The next lemma is (in the case where $L$ is a number field) a well-known adelic version of the implicit function theorem, which we will use to translate the problem into one of pure analysis.  Given a formal power series $F\in L\llbracket w\rrbracket$, we will say that $\epsilon>0$ is a \emph{($v$-adic) radius of convergence} for $F$ if the sum $F(w)$ converges $v$-absolutely for $|w|_v<\epsilon$. We will say that the $M_L$-divisor $\mathfrak{e}$ is a \emph{global radius of convergence} for $F$ if $\mathfrak{e}(v)$ is a $v$-adic radius of convergence for $F$,  for each $v\in M_L$.  Given a Laurent series $F\in L\llp w\rrp$, we will say that $\epsilon$ or $\mathfrak{e}$ is a radius of convergence for $F$ if it is for $w^mF(w)$, where $m$ is the order of the pole of $F$ at $w=0$.

\begin{lemma}\label{lem:convergence}
Let  $g\in \Ocal_{\beta, X}$.  Then $g$ is analytic at every place, i.e.,  there is a (global) radius of convergence for the image of $g$ in $L\llbracket w_\beta\rrbracket$.
\end{lemma}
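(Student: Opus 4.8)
The plan is to use that the image $G(w)=\sum_{n\ge 0}c_n w^n$ of $g$ in $L\llbracket w_\beta\rrbracket$ (write $w=w_\beta$, and take $L=k$ a number field, as in the applications) is not an arbitrary formal series but an \emph{algebraic} power series. Indeed, since $w_\beta\in K=L(X)$ is non-constant, $K$ is a finite extension of $L(w_\beta)$, so $g$ satisfies a nontrivial polynomial relation over $L(w_\beta)$; hence $G$ is algebraic over $L(w)$, with coefficients $c_n$ lying in a fixed number field (the residue field $L(\beta)$ of $\ord_\beta$ will do). The two potential obstructions to a uniform radius of convergence --- bad primes and archimedean branch points --- are then both handled by classical facts about algebraic functions.

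For the non-archimedean places I would invoke Eisenstein's theorem on algebraic power series: there is a positive integer $A$ such that $A^{n}c_n$ is an algebraic integer for every $n\ge 0$. Hence for every non-archimedean $v\in M_L$ (with $|\cdot|_v$ extended to $\overline{L}$ as fixed throughout) one has $|A^{n}c_n|_v\le 1$, that is
\[ |c_n|_v\le |A|_v^{-n}\qquad\text{for all }n\ge 0, \]
so $G(w)$ converges $v$-absolutely on the disc $|w|_v<|A|_v$. Thus $v\mapsto |A|_v$ is a $v$-adic radius of convergence at every non-archimedean place; it equals $1$ for all but the finitely many $v$ dividing $A$, and it has the form $|\alpha|_v$ with $\alpha=A\in L^{*}$ --- precisely the non-archimedean part of an $M_L$-divisor.

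For an archimedean $v\in M_L$ I would fix a compatible embedding $\overline{L}\hookrightarrow\CC$; then $X(\CC)$ is a compact Riemann surface, and since $w_\beta$ is a uniformizer at $\beta$ it restricts to a biholomorphism from a neighbourhood of the chosen conjugate of $\beta$ onto a disc about $0\in\CC$. Because $g$ is regular at $\beta$, the function $g\circ w_\beta^{-1}$ is then holomorphic on some disc $|w|_v<\rho_v$ with $\rho_v>0$, and its Taylor expansion at $0$ agrees with $G(w)$ (both are computed by the same $\mathfrak{m}_\beta$-adic truncation of $g$); hence $\rho_v$ is a $v$-adic radius of convergence. Equivalently, the algebraic function $G$ is analytic at any point that is neither a pole nor a branch point, and $w=0$ is such a point. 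Since there are only finitely many archimedean places, setting $\mathfrak{e}(v)=|A|_v$ for $v$ non-archimedean and $\mathfrak{e}(v)=\rho_v$ for $v$ archimedean yields an $M_L$-divisor that is a global radius of convergence for $g$.

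The only substantive step is the simultaneous control of $|c_n|_v$ over all non-archimedean $v$, which is exactly the content of Eisenstein's theorem; the archimedean estimate is the soft statement that an algebraic function is analytic away from its finitely many singularities, and the final assembly into an $M_L$-divisor is bookkeeping. If one wishes to avoid citing Eisenstein's theorem, the bound $A^{n}c_n\in\overline{\ZZ}$ can instead be obtained by hand --- clear denominators in the defining polynomial of a primitive element of $K/L(w_\beta)$ and run the resulting recursion for the coefficients $c_n$ --- but this is more work and requires separating off the case in which $w=0$ is a ramification point of the associated plane curve, so the cited route seems preferable.
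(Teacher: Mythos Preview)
Your proof is correct. The paper takes a more constructive route: it fixes an affine chart $U=\Spec(R[X_1,\dots,X_s]/(F_1,\dots,F_s))$ containing $\beta$, with $R=L\llbracket w_\beta\rrbracket$, and runs Newton's method to lift $\beta\in U(R/\pf)$ to a point $Y\in U(R)$; tracking the iterates shows that for all $v$ outside the finite set where the defining equations, the Jacobian determinant at $\beta$, or the presentation of $g$ fail to be $v$-integral, the resulting series for $g$ lies in $\Ocal_{v,L}\llbracket w_\beta\rrbracket$. You instead observe that $G$ is algebraic over $L(w)$ and invoke Eisenstein's theorem as a black box; the archimedean case is handled the same way in spirit (implicit function theorem versus holomorphicity on the associated Riemann surface). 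Both routes yield the stronger integrality statement recorded in the paper's subsequent remark --- indeed the paper itself flags the conclusion as ``a result essentially due to Eisenstein'' --- so you are citing precisely the classical theorem that the paper is re-deriving in geometric form. The trade-off: the paper's argument is self-contained and names concretely which primes are excluded (those of bad reduction for the chosen affine model), which is mildly useful downstream when one wants to identify the set $S$ in later lemmas; your version is shorter and more transparent, at the cost of treating Eisenstein's theorem as given.
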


\begin{proof}
If $v$ is an archimedean place, this is simply the implicit function theorem.  There is a non-archimedean version of the implicit function theorem, and we may apply this at finitely many of the non-archimedean places, but for the conclusion when $L$ is a number field, we need to know that the radius of convergence at $v$ is 1 for all but finitely many $v$.

Let $R=L\llbracket w_\beta\rrbracket$, and let $\pf=w_\beta R$.  If $U\subseteq X$ is an affine subscheme of $X$, containing $\beta$, then $U$ defines a 0-dimensional affine scheme over $R$.  Suppose that  \[U=\Spec(R[X_1, ..., X_s]/(F_1, ..., F_s)).\]
Then, as usual, we may use Newton's Method to lift the point $\beta\in U(R/\pf)$ to a point in $U(R)$.  In other words, if $F(X)$ denotes the vector \[\langle F_1(X_1, ..., X_s), ..., F_s(X_1, ..., X_s)\rangle,\] and $J(F)$ denotes the Jacobian matrix of this system, we let $X_0=\beta$, and take
\[X_{n+1}=X_n-J(F)(X_n)^{-1}F(X_n).\]
Note that $J(F)(X_n)$ is invertible for each $n$, since 
\[\det(J(F)(X_n))\equiv \det(J(F)(\beta))\not\equiv 0\MOD{\pf}.\]
It is simple enough to show that $F(X_n)\in \pf^{2^n}$, for each $n$, and so by the completeness of $R$, 
this sequence of points converges to a limit $Y=(Y_1, ..., Y_s)$ in $R^s$.  This vector satisfies $Y\equiv \beta\MOD{\pf}$ or, viewing the entries as functions of $w_\beta$, $Y(0)=\beta$.  The tuple $Y$ is also the unique element of $R^s$ with this property.  Thus, the series $Y_1, ... , Y_s$, within their radius of convergence, define the coordinate functions $y_1, ..., y_s$ on $U$.

Now, let $S$ be a finite set of places such that for $v\not\in S$, the coefficients of all of equations defining $U$ are $v$-integral, and $\det(J(F))(\beta)$ is a $v$-unit.  Then it is easy to check, by induction, that $X_n$ has $v$-integral coefficients, and so  \[Y_i\in\Ocal_{v, L}\llbracket w_\beta\rrbracket\subseteq L\llbracket w_\beta\rrbracket.\]  Now, if we represent $g$ in these coordinates
\[g(y_1, ..., y_s)=\frac{G_1(y_1, ..., y_s)}{G_2(y_1, ..., y_s)},\]
we see that $g(Y_1, ..., Y_s)$ has $v$-integral coefficients, so long as $G_2(Y_1(0), ..., Y_s(0))$ is a $v$-unit.  If we enlarge $S$ to contain all places for which this fails, we have $g\in\Ocal_{v, L}\llbracket w_\beta\rrbracket$ for all $v\not\in S$, and so $g(w)$ converges for any $|w|_v<1$.
\end{proof}
\begin{remark}
What we have in fact proven, and we will make use of this below, is that there is a finite set $S\subseteq M_L$, containing all archimedean places, such that for $v\not\in S$, the series for $g$ in $L\llbracket w_\beta\rrbracket$ has $v$-integral coefficients, a result essentially due to Eisenstein.
\end{remark}

  The following statement is essentially a continuity claim, which states that a function on a curve can take $v$-adically large values only at points $v$-adically close to its poles.

\begin{lemma}\label{lem:cont}
Let $Z\subseteq X(L)$ be a finite set, and for each $\alpha\in Z$ let $\mathfrak{e}_\alpha$ be an $M_L$-divisor.  Then for any rational function $g$ on $X$ having no poles on $X\setminus Z$, there is an $M_L$ divisor $\mathfrak{d}$ such that for any $t\in X(\overline{L})$ and any $v\in M_L$, $|g(t)|_v > \mathfrak{d}(v)$ implies
$|w_\alpha(t)|_v< \mathfrak{e}_\alpha(v)$ for some $\alpha\in Z$.
\end{lemma}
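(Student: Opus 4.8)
The plan is to reduce to the case where $g$ has a pole at a single point of $Z$, and then to handle that local situation by comparing $g$ near $\alpha$ to a power of the uniformizer $w_\alpha$. First I would observe that, since $X$ is a smooth projective curve and $g$ has poles only on the finite set $Z$, the divisor of poles of $g$ is supported on $Z$; write $\operatorname{div}(g)_\infty = \sum_{\alpha \in Z} n_\alpha (\alpha)$ with $n_\alpha \geq 0$. For each $\alpha \in Z$ with $n_\alpha > 0$, the function $g\, w_\alpha^{n_\alpha}$ is regular at $\alpha$, but may still have poles elsewhere on $Z$; to isolate one point at a time, I would instead pick, for each $\alpha \in Z$, an auxiliary function. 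Concretely, choose a rational function $h_\alpha$ on $X$ (defined over $L$, after possibly enlarging the ground field and then descending, or just working over $\overline{L}$ since the statement is insensitive to finitely many places) that has a zero of order exactly $n_\alpha$ at $\alpha$ and no other zeros or poles in a fixed affine neighbourhood of $Z$; such a function exists by Riemann–Roch. Then $g \prod_{\alpha} h_\alpha$ is regular on an affine open $U \supseteq Z$, i.e. it lies in $\Ocal(U)$.

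Next I would apply Lemma~\ref{lem:convergence} — or rather the Eisenstein-type remark following it — together with a more elementary global boundedness statement: a function regular on an affine $U$ defined over $L$ is, outside a finite set $S$ of places, bounded by $1$ in absolute value on the $v$-integral points, and at the finitely many remaining places is bounded by some constant on any prescribed region. The cleanest route: cover $X$ by affine opens, express $g$ on each piece, and note that on $X \setminus \bigcup_\alpha \disc{v}{\alpha}{\mathfrak{e}_\alpha(v)}$ — which is a $v$-adically closed (hence compact in the archimedean and, after the usual reductions, also in the non-archimedean case) subset avoiding all poles of $g$ — the function $g$ is bounded. Turning this boundedness into an explicit $M_L$-divisor $\mathfrak{d}$ is where the $M_L$-divisor formalism does its work: at the archimedean places and the finitely many bad non-archimedean places we get an honest real bound, and we pad $\mathfrak{d}$ up to an element of $k^*$ in absolute value at the non-archimedean places; at all remaining places $\mathfrak{d}(v) = 1$ suffices because $g$, suitably cleared of its poles, has $v$-integral coefficients there and the complement of the $v$-integral points already forces $|w_\alpha(t)|_v$ small for some $\alpha$.

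The one genuine subtlety — and the step I expect to be the main obstacle — is the uniformity across all non-archimedean places simultaneously. For a single place $v$ this is just continuity/compactness, but to produce an $M_L$-divisor I need the bound to be trivial ($=1$) for all but finitely many $v$, which forces me to track integrality of the coefficients of $g$ in the local expansions, i.e. to invoke the Eisenstein remark after Lemma~\ref{lem:convergence} rather than mere convergence. Concretely, I would fix a model of $X$ over the ring of $S$-integers for a large enough $S$, fix integral equations for an affine cover, and check that for $v \notin S$ the reduction argument shows: either $t$ reduces to a point of $Z$ mod $v$ (equivalently $|w_\alpha(t)|_v < 1 \le \mathfrak{e}_\alpha(v)$ for some $\alpha$, after also arranging $\mathfrak{e}_\alpha(v) = 1$ off a finite set, which we may since $M_L$-divisors are eventually $1$), or $t$ lies on the integral locus where $g$ is $v$-integral and hence $|g(t)|_v \le 1$. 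Taking $\mathfrak{d}$ to be $1$ at such $v$ and large enough elsewhere completes the argument. The only care needed is to enlarge $S$ to absorb the finitely many places appearing in the denominators of the $h_\alpha$, in the coefficients of the cover, and in the Eisenstein constants for $g$ itself, all of which is harmless.
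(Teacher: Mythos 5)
Your route is viable but differs from the paper's, which is markedly more economical. The paper invokes Lemma~2.2 of \cite[p.~85]{lang}: for rational functions $f_1,\dots,f_n$ on $X$ with no common zero, there is an $M_L$-divisor $\mathfrak{c}$ with $\sup_i |f_i(t)|_v \geq \mathfrak{c}(v)$ for all $t\in X(\overline{L})$ and all $v$. Applying this to $f_1 = 1/g$ and $f_\alpha = 1/(w_\alpha^{n_\alpha}g)$ for each $\alpha\in Z$, which have no common zero since $\alpha$ is not a zero of $f_\alpha$, and setting $\mathfrak{d}(v) = \max_\alpha\{\mathfrak{e}_\alpha(v)^{-n_\alpha},1\}\,\mathfrak{c}(v)^{-1}$, the conclusion falls out in a few lines. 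Your approach --- build an $S$-integral model, use archimedean compactness at $v\in S$, and reduce mod $\pf$ at $v\notin S$ --- is essentially a proof of Lang's lemma from scratch, so it can certainly be made to work, but it is doing more labour than the statement requires. Two small points if you do carry it through: the auxiliary functions $h_\alpha$ you introduce to clear the poles of $g$ never play a role in your eventual dichotomy (you argue directly with $g$ after ``the cleanest route'') and can be dropped entirely; and in the dichotomy ``either $t$ reduces to a pole of $g$ mod $\pf$, or $g$ is $v$-integral at $t$,'' to conclude $|w_\alpha(t)|_v < 1$ in the first case you also need $w_\alpha$ to remain a uniformizer at $\alpha$ on the reduction mod $\pf$, which requires enlarging $S$ once more. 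These gaps are fixable, so your outline is sound; the paper's version simply makes the $\mathfrak{e}_\alpha$-dependence explicit and delegates the uniformity across places to a single citation rather than re-deriving it.
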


\begin{proof}
The conclusion of the lemma only gets weaker as $Z$ gets larger, and so we will assume that $Z$ is exactly the set of poles of $g$.
The curve $X$ is smooth and projective, and so by Lemma~2.2 of \cite[p.~85]{lang}, for any functions $f_1, ..., f_n\in \overline{L}(X)$ with no common zero, there is an $M_L$-divisor $\mathfrak{c}$ with
\[\sup_{1\leq i\leq n}|f_i(t)|_v\geq \mathfrak{c}(v)\]
for all $v\in M_L$ and all $t\in X(\overline{L})$.
Let $n_\alpha$ be the order of the pole of $g$ at $\alpha$, and let $f_1=1/g$, $f_{\alpha}=1/(w_{\alpha}^{n_{\alpha}} g)$, for each $\alpha\in Z$.  The zeros of $f_1$ are contained in $Z$, but $\alpha$ is not a zero of $f_{\alpha}$, and so the lemma applies to this collection of functions.  Let $\mathfrak{c}$ be the $M_L$-divisor with the above property, and  choose
\[\mathfrak{d}(v)=\max_{\alpha\in Z}\{\mathfrak{e}_\alpha(v)^{-n_{\alpha}}, 1\}\mathfrak{c}(v)^{-1}.\]
Note that $|g(t)|_v> \mathfrak{d}(v)$ immediately implies $|f_1(t)|_v<\mathfrak{c}(v)$, and so for each $v\in M_L$, there is some $\alpha\in Z$ with $|f_{\alpha}(t)|_v\geq \mathfrak{c}(v)$.  For that particular $\alpha$ and $v$, then, we have
\[|w_{\alpha}(t)|_v^{-n_{\alpha}}=|g(t)f_{\alpha}(t)|_v> \mathfrak{d}(v)\mathfrak{c}(v)= \max_{\alpha'\in Z}\{\mathfrak{e}_{\alpha'}(v)^{-n_{\alpha'}}, 1\}\geq \mathfrak{e}_\alpha(v)^{-n_{\alpha}}.\]
Since $n_{\alpha}\geq 1$, the result is proven.
\end{proof}

  Our next lemma produces a formal object which, in certain isotrivial cases (take $X=\PP^1$, $f\in L[z]$, and $P_t=t$), corresponds to the B\"{o}ttcher coordinate.  Note that the lemma is slightly ambiguous, since there may be several $d^N$th roots of an element of $K_\beta$, but a choice of roots is made in the proof.

\begin{lemma}\label{lem:formal bottcher}
With the notation above, there exists a $\G_\beta \in K_\beta$ such that 
\[\left(f^N(P)a_d^{-(d^N-1)/(d-1)}\right)^{1/d^N}\longrightarrow\ \G_\beta\]
in the $\mf$-adic topology, as $N\to\infty$.  Furthermore, there is a finite set $S\subseteq M_L$ such that for any $v\not\in S$, the image of $\G_\beta$ in $L\llbracket w\rrbracket$ lies in the subring $\Ocal_{v, L}\llbracket w\rrbracket$ consisting of power series with coefficients integral at $v$.
\end{lemma}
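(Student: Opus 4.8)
The plan is to realize $\G_\beta$ as an $\mf$-adic limit of explicitly chosen $d^N$-th roots, organized as a telescoping product whose factors tend to $1$. Write $d=\deg f$ and, for $N\geq 0$, put $\psi_N=f^N(P)\,a_d^{-(d^N-1)/(d-1)}\in K_\beta$ (this makes sense since $(d^N-1)/(d-1)=1+d+\cdots+d^{N-1}\in\ZZ$). From $f^{N+1}(P)=a_df^N(P)^d+a_{d-1}f^N(P)^{d-1}+\cdots+a_0$ a one-line computation gives the recursion
\[\psi_{N+1}=\psi_N^{\,d}\,(1+\eta_N),\qquad \eta_N=\sum_{i=0}^{d-1}\frac{a_i}{a_d}\,f^N(P)^{\,i-d}.\]
The basin hypothesis supplies the required estimates. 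Since $\basin^0_{\ord_\beta}(f)$ is closed under $f$ (Lemma~\ref{lem:basins}), every iterate $f^N(P)$ lies in $\basin^0_{\ord_\beta}(f)$, so $|f^N(P)|_{\ord_\beta}>\Lambda_{\ord_\beta}\geq |a_i/a_d|_{\ord_\beta}^{1/(d-i)}$ (recall $(2d)_{\ord_\beta}=1$); rearranging, $\ord_\beta(a_i/a_d)-(d-i)\ord_\beta(f^N(P))>0$ for each $i<d$, hence $\ord_\beta(\eta_N)>0$ for \emph{every} $N$. Moreover $|f^N(P)|_{\ord_\beta}\to\infty$ (again Lemma~\ref{lem:basins}, as $P\in\basin_{\ord_\beta}(f)$), so in fact $\ord_\beta(\eta_N)\to\infty$.

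Next I would set $\phi_0=P$ and $\phi_{N+1}=\phi_N\cdot(1+\eta_N)^{1/d^{N+1}}$, where $(1+\eta_N)^{1/d^{N+1}}$ denotes the $d^{N+1}$-th root of $1+\eta_N$ with constant term $1$, defined by the binomial series; this converges $\mf$-adically precisely because $\ord_\beta(\eta_N)>0$, so $\phi_N\in K_\beta$. Induction on $N$ via the recursion above gives $\phi_N^{\,d^N}=\psi_N$, so each $\phi_N$ is one of the roots in the statement. Since $\ord_\beta\!\left((1+\eta_N)^{1/d^{N+1}}-1\right)\geq\ord_\beta(\eta_N)\to\infty$ (the binomial coefficients are constants, so $\ord_\beta$-integral), the product $\prod_{N\geq 0}(1+\eta_N)^{1/d^{N+1}}$ converges in the complete field $K_\beta$, and I would take
\[\G_\beta=P\prod_{N\geq 0}(1+\eta_N)^{1/d^{N+1}}=\lim_{N\to\infty}\phi_N,\]
which settles the first claim.

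For the integrality claim I would let $S$ consist of the archimedean places of $L$, the finitely many places dividing $d$, and the finitely many places at which either some $a_i$ or $w_\beta^{\,m}P$ (with $m=-\ord_\beta(P)$) fails to have $v$-integral $w_\beta$-expansion — the Eisenstein-type statement in the remark after Lemma~\ref{lem:convergence} — or the leading coefficient of $a_d$ or of $P$ fails to be a $v$-unit. Fix $v\notin S$. Then $a_0,\dots,a_d$, $a_d^{-1}$ and $P$ all lie in the ring of Laurent series over $\Ocal_{v,L}$, hence so does each $\psi_N$; a short computation with leading terms shows the leading coefficient of $\psi_N$ is $c_P^{\,d^N}$, a $v$-unit (here $c_P$ is the leading coefficient of $P$), so $\psi_N^{-1}$ lies there as well, and therefore $\eta_N=\psi_{N+1}\psi_N^{-d}-1\in\Ocal_{v,L}\llbracket w_\beta\rrbracket$ with $\eta_N\equiv 0\pmod{w_\beta}$. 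Now $v\nmid d^{N+1}$, so $X^{d^{N+1}}-(1+\eta_N)$ reduces modulo $(w_\beta)$ to $X^{d^{N+1}}-1$, which has the simple root $X=1$ with unit derivative $d^{N+1}$; Hensel's lemma in the $(w_\beta)$-adically complete ring $\Ocal_{v,L}\llbracket w_\beta\rrbracket$ — the Newton iteration already used in the proof of Lemma~\ref{lem:convergence} — then places $(1+\eta_N)^{1/d^{N+1}}$ in $\Ocal_{v,L}\llbracket w_\beta\rrbracket$. Consequently every $\phi_N$, and hence its $w_\beta$-adic limit $\G_\beta$, has $v$-integral expansion at $\beta$.

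The step I expect to be the crux is the inequality $\ord_\beta(\eta_N)>0$ for \emph{all} $N$ rather than merely for $N$ large: this is what guarantees that each factor $(1+\eta_N)^{1/d^{N+1}}$ genuinely lies in $L\llbracket w_\beta\rrbracket$, so that no roots of constants must be adjoined and $\G_\beta$ stays in $K_\beta$, and it is the precise point where the defining inequalities of $\basin^0_{\ord_\beta}(f)$ enter. Once that is in hand the rest is bookkeeping; in particular the integrality statement becomes routine once one notices that enlarging $S$ by the divisors of $d$ converts the extraction of $d^{N+1}$-th roots into an application of Hensel's lemma.
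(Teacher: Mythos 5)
Your proof is correct, and it is a genuinely different organization of the argument than the paper's, though the two hinge on the same core estimate. The paper sets $\xi_N = w_\beta^{md^N}f^N(P)a_d^{-(d^N-1)/(d-1)}$, invokes Hensel's lemma independently for each $N$ to extract $G_N$ with $G_N^{d^N}=\xi_N$ and $G_N\equiv\alpha\pmod{\mf}$, and then proves the sequence $(G_N)$ is $\mf$-adically Cauchy by a roots-of-unity trick: writing $|G_N-G_{N+1}|_\mf = \prod_i|G_N-\zeta^iG_{N+1}|_\mf = |\xi_N^d-\xi_{N+1}|_\mf$ and bounding the latter. Your version replaces this with a telescoping product: the identity $\psi_{N+1}=\psi_N^d(1+\eta_N)$ (which, after multiplying through by $w_\beta^{md^{N+1}}$, is exactly the paper's $\xi_{N+1}=\xi_N^d(1+\eta_N)$, so $\eta_N$ controls the same error term the paper bounds) lets you write $\G_\beta = P\prod_{N\ge0}(1+\eta_N)^{1/d^{N+1}}$, and convergence of the product is immediate from $\ord_\beta(\eta_N)\to\infty$ without needing the roots-of-unity device. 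Indeed, by uniqueness in Hensel's lemma, your $w_\beta^m\phi_N$ agrees with the paper's $G_N$, so you are constructing the same object. Your handling of the integrality clause mirrors the paper's — both enlarge $S$ to kill $d$ so that extracting $d^{N+1}$-th roots is a unit-derivative Hensel problem over $\Ocal_{v,L}\llbracket w_\beta\rrbracket$ — and your observation that the leading coefficient of $\psi_N$ is $c_P^{d^N}$, a $v$-unit for $v\notin S$, is the correct check. The one thing you have right that is worth emphasizing explicitly (and which you flag as the crux) is the strict inequality $\ord_\beta(\eta_N)>0$ for \emph{all} $N\ge0$, not just for large $N$; this uses that $f^N(P)$ lies in $\basin^0_\beta(f)$ for every $N$, which in turn is exactly the closure-under-$f$ statement from Lemma~\ref{lem:basins}, and it is what keeps each factor $(1+\eta_N)^{1/d^{N+1}}$ inside $L\llbracket w_\beta\rrbracket$ rather than a ramified extension. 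In short: correct, cleaner organization of essentially the same estimates.
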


\begin{proof}
Since $P\in \basin_\beta^0(f)$, we have for all $N$,
\[|f^N(P)|_\mf=|P|_\mf^{d^N}|a_d|_\mf^{(d^N-1)/(d-1)},\]
by Lemma~\ref{lem:basins}.  If $m=\hl_{f, \beta}(P)=-\ord_\beta(P)$, and $w=w_\beta$ is a uniformizer at $\beta$, let
\[\xi_N=f^N(P)w^{md^N}a_d^{-(d^N-1)/(d-1)}\in \Ocal_{\beta, X}^*.\]
Note that, for all $Q\in \basin_\beta^0(f)$, we have
\[\left|f(Q)-a_dQ^d\right|_\mf=\left|a_{d-1}Q^{d-1}+\cdots+a_1Q+a_0\right|_\mf<|a_dQ^d|_\mf,\]
and so the leading term of the series $f(Q)$ agrees with the leading term of $a_dQ^d$. 
Thus if $P=\alpha w^{-m}+O(w^{1-m})$ and $a_d=\gamma w^n+O(w^{1+n})$, say, we have
\[f^N(P)=\gamma^{(d^N-1)/(d-1)}\alpha^{d^N}w^{q}+O(w^{1+q})\]
for $q=n(d^N-1)/(d-1)-md^N$, and hence
\[\xi_N=\alpha^{d^N}+O(w).\]
Thus the polynomial $\Phi(X)=X^{d^N}-\xi_N$ has a simple root, modulo $\mf$, at $X=\alpha$, and so
by Hensel's Lemma (a special case of the argument used in Lemma~\ref{lem:convergence}), there is some $G_N\in \widehat{\Ocal}_{\beta, X}$ such that $G_N^{d^N}=\xi_N$, and $G_N\equiv \alpha\MOD{\mf}$.  We wish to show that the sequence $G_N$ has a limit in $\widehat{\Ocal}_{\beta, X}$.

Suppose that we set $B=\max_{0\leq i<d}|a_i/a_d|_\mf$, and take $Q\in\basin_\beta^0(f)$.  Then certainly
\[\left|f(Q)-a_dQ^d\right|_\mf = |a_d|_\mf\left|\sum_{i=0}^{d-1}\frac{a_i}{a_d}Q^i\right|_\mf\leq B|a_d|_\mf  |Q|_\mf^{d-1}\]
(recalling that $Q\in \basin_\beta^0(f)$ implies $|Q|_\mf>1$).
Since $\basin_\beta^0(f)$ is closed under $f$, then, we have
\[\left|f^{N+1}(P)-a_df^N(P)^d\right|_\mf\leq B|a_d|_\mf|f^N(P)|_\mf^{d-1}= B|a_d|_\mf^{d^N} |P|_\mf^{d^N(d-1)},\]
by Lemma~\ref{lem:basins}.
It follows that
\begin{eqnarray*}
\left|\xi_{N+1}-\xi_N^d\right|_\mf&=&
|w|_\mf^{md^{N+1}}|a_d|_\mf^{-(d^{N+1}-1)/(d-1)}\left|f^{N+1}(P)-a_d f^N(P)^d\right|_\mf\\
&\leq & |w|_\mf^{md^{N+1}}|a_d|_\mf^{-(d^{N+1}-1)/(d-1)}B|a_d|_\mf^{d^N} |P|_\mf^{d^N(d-1)}\\
&=&B |P|_\mf^{-d^N}|a_d|_\mf^{-(d^N-1)/(d-1)}\\
&\leq& B|P|_\mf^{-d^N}|P|_\mf^{\frac{1}{2}(d^N-1)}\\
&\leq & B|P|_\mf^{-d^N/2}.
\end{eqnarray*}
Now, the roots of $X^{d^{N+1}}-\xi_{N+1}$ are $\zeta^i G_{N+1}$, for $0\leq i< d^{N+1}$, and $\zeta$ a primitive $d^{N+1}$th root of unity.  We have $G_N=\alpha+O(w)$, and the same for $G_{N+1}$, and so for all $i\neq 0$, 
\[\left|G_N-\zeta^iG_{N+1}\right|_\mf=\left|(1-\zeta^i)\alpha+O(w)\right|_\mf=1.\]
It follows that
\begin{eqnarray*}
|G_N-G_{N+1}|_\mf&=&\prod_{i=0}^{d-1}|G_N-\zeta^iG_{N+1}|_\mf\\
&=&\left|G_N^{d^{N+1}}-\xi_{N+1}\right|_\mf\\
&=&\left|\xi_N^d-\xi_{N+1}\right|_\mf\\
&\leq& B|P|_\mf^{-d^N/2}.
\end{eqnarray*}
By the triangle inequality, and the generous estimate $N<d^N$, we have for all $M\geq N$,
\begin{multline*}\left|G_{M}-G_N\right|_\mf\leq \sum_{i=N}^{M-1}B|P|_\mf^{-d^i/2}<B\sum_{i=N}^{M-1}|P|_\mf^{-i/2} \\< B\int_{N-1}^\infty |P|_\mf^{-s/2}ds=\frac{B|P|_\mf^{-(N-1)/2}}{\frac{1}{2}\log|P|_\mf}.\end{multline*}
Since $P\in \basin_\beta^0(F)$, we have $B<|P|_\mf^d$, and it follows that the sequence of $G_N$ is Cauchy.
Since $\widehat{\Ocal}_{\beta, X}$ is complete with respect to the $\mf$-adic metric, there is a limit $G_\infty\in\widehat{\Ocal}_{\beta, X}^*$ of this sequence.  
We can simply take $\G_\beta=w^{-m}G_\infty$.

It remains to show that, in the case where $L$ is a global field, we may find a finite set of places $S\subseteq M_L$ such that for $v\not\in S$, we have $\G_\beta\in\Ocal_{v, L}\llp w\rrp$. Invoking Lemma~\ref{lem:convergence},  choose a finite set of primes $S\subseteq M_L$ such that the for $v\not\in S$, the series for $P$ and each $a_i$ have coefficients in $\Ocal_{v, L}$, and such that $\alpha$ (the lead coefficient of the series for $P$), $\beta$ (the lead coefficient of $a_d$), and $d$ are units in $\Ocal_{v, L}$.  If $v\not\in S$, it follows from the fact that $a_d\in \Ocal_{v, L}\llp w\rrp$ and $\beta\in \Ocal_{v, L}^*$ that $a_d^{-1}\in \Ocal_{v, L}\llp w\rrp$.  It is now clear that $\xi_N\in\Ocal_{v, L}\llbracket w\rrbracket$ for all $N$, and the leading coefficient of $\xi_N$ is $\alpha^{d^N}\in\Ocal_{v, L}^*$.  Now, the Hensel's Lemma construction of $G_N$ is as follows.  For $\Phi_N(X)=X^{d^N}-\xi_N$, we let $X_0=\alpha$, and then
\[X_{n+1}=X_n-\frac{\Phi(X_n)}{\Phi'(X_n)}.\]
Hensel's Lemma shows that the $X_n$ converge $\mf$-adically, and $G_N$ is their limit.
Since $X_0=\alpha\in \Ocal_{v, L}\llbracket w\rrbracket$, suppose that $X_n\in \Ocal_{v, L}\llbracket w\rrbracket$.  Then $\Phi'(X_n)=d^N X_n^{d^N-1}$ has the leading term $d^N\alpha^{d^N-1}\in \Ocal_{v, L}^*$, and hence $\Phi'(X_n)$ is a unit in $\Ocal_{v, L}\llbracket w\rrbracket$.  In other words, $X_n\in \Ocal_{v, L}\llbracket w\rrbracket$ implies $X_{n+1}\in \Ocal_{v, L}\llbracket w\rrbracket$.  But $\Ocal_{v, L}\llbracket w\rrbracket \subseteq L\llbracket w\rrbracket$ is closed in the $\mf$-adic topology, and so $G_N=\lim_{n\to \infty}X_n\in\Ocal_{v, L}\llbracket w\rrbracket$.  Similarly, $G_\infty=\lim_{N\to\infty} G_N$ has coefficients in $\Ocal_{v, L}$, and hence so too does $\G_\beta$.
\end{proof}


At this point, the power series $\G_\beta$ is simply a formal limit.  It is easy enough to see, if $L=\CC$, say, that this sort of formal convergence of functions in the local ring $\widehat{\Ocal}_{\beta, X}$ neither implies, nor is implied by, uniform convergence as functions in a neighbourhood of $\beta$.    On the other hand, it is easy to show that \emph{if} a sequence should converge both uniformly and formally, then the two limits must be identical, since a derivative of a uniform limit is the limit of the derivatives.  Before showing that $\G_\beta$ does, in fact, define a smooth function at $\beta$, we will prove a version of the Schwarz Lemma.

\begin{lemma}[Schwarz Lemma]
Let $v\in M_L$, suppose that $\epsilon>0$, and if $v$ is non-archimedean, that there is an $\alpha\in L^*$ with $\epsilon=|\alpha|_v$.   If the  series $g\in L\llbracket w\rrbracket$ converges uniformly on $U=\{w\in \overline{L}_v:|w|_v<\epsilon\},$ and $|g(w)|_v\leq B$ for all $w\in U$, then
\[|g(w)|_v\leq |w|_v^{\ord_0(g)}\frac{B}{\epsilon^{\ord_0(g)}}\]
for $w\in U$.
\end{lemma}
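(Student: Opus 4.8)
The plan is to factor out the zero of $g$ at the origin and then appeal to the maximum modulus principle in each of the two cases. Assume $g\neq 0$ (the statement being trivial otherwise), set $n=\ord_0(g)$, and write $g(w)=w^{n}h(w)$ with $h\in L\llbracket w\rrbracket$ and $h(0)\neq 0$. The hypothesis forces the radius of convergence of $g$, hence of $h$, to be at least $\epsilon$, so both series converge on each closed disk $\{w\in\overline{L}_v:|w|_v\leq r\}$ with $r<\epsilon$. Since the claimed inequality $|g(w)|_v\leq |w|_v^{n}B/\epsilon^{n}$ is equivalent to $|h(w)|_v\leq B/\epsilon^{n}$ for $w\in U$, it suffices to prove this bound for $h$.

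First I would fix $r<\epsilon$ and note that on the sphere $|w|_v=r$ one has $|h(w)|_v=|g(w)|_v/r^{n}\leq B/r^{n}$. I would then propagate this estimate to the whole closed disk $|w|_v\leq r$. When $v$ is archimedean this is the classical maximum modulus principle for the holomorphic function $h$. When $v$ is non-archimedean I would argue via the Gauss norm: writing $h=\sum_{k\geq 0}c_k w^k$, the ultrametric inequality gives $|h(w)|_v\leq\sup_k|c_k|_v|w|_v^{k}\leq\sup_k|c_k|_v r^{k}=:\|h\|_r$ whenever $|w|_v\leq r$, while if $r\in|\overline{L}_v^*|$ there is a point on the sphere $|w|_v=r$ at which $|h|_v$ attains $\|h\|_r$ (choose the point so that the reduction modulo the maximal ideal of a suitable rescaling of $h$ is a nonvanishing polynomial over the residue field, which is possible because the residue field of $\overline{L}_v$ is infinite). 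Either way one gets $|h(w)|_v\leq B/r^{n}$ for all $|w|_v\leq r$. Finally, letting $r\uparrow\epsilon$, through radii in the value group when $v$ is non-archimedean, gives $|h(w)|_v\leq B/\epsilon^{n}$ for every $w\in U$, which is what we want.

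The only delicate point is the non-archimedean half: making the maximum modulus principle over $\overline{L}_v$ precise, and checking that $r$ may be taken arbitrarily close to $\epsilon$. Here the hypothesis $\epsilon=|\alpha|_v$ for some $\alpha\in L^*$ is exactly what places $\epsilon$ in the value group, and since $|\overline{L}_v^*|$ is a dense subgroup of $\RR_{>0}$ it accumulates at $\epsilon$ from below, supplying the admissible radii needed to pass to the limit. The archimedean case is entirely classical, so I expect no real difficulty there.
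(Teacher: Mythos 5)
Your argument is correct and follows the same route as the paper: factor $g(w)=w^n h(w)$, bound $h$ on disks $|w|_v\le r<\epsilon$ via the maximum modulus principle, and let $r\uparrow\epsilon$ through the value group. The only difference is that the paper cites Robert's book for the non-archimedean maximum principle where you spell out the Gauss-norm argument explicitly, but the underlying proof is the same.
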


\begin{proof}
There is nothing to prove if $g(0)\neq 0$, so suppose that $n=\ord_0(g)\geq 1$.  We have
$g(w)=w^nh(w)$ for some $h$ with $h(0)\neq 0$, and $h$ analytic on $U$.  For any $r\in v(L)$ with $0<r<\epsilon$ we have, by the maximum modulus principle (for the non-archimedean maximum principle, see  \cite[p.~318]{robert}),
\[\max_{|w|_v\leq r}|h(w)|_v= \max_{|w|_v=r} \left|\frac{g(w)}{w^n}\right|_v\leq r^{-n} B.\]
  Since this is true for all $r<\epsilon$, we in fact have $|h(w)|_v\leq \epsilon^{-n}B$ for all $w\in U$, and hence $|g(w)|_v\leq |w|^n_v\epsilon^{-n} B$.
\end{proof}

\begin{lemma}\label{lem:analytic bottcher}
Maintaining the notation above, there is an $M_L$-divisor $\mathfrak{e}$ such that the \emph{a priori} formal power series $\G_\beta$ defines a $v$-adic analytic function on $\ann{v}{\beta}{0}{\mathfrak{e}(v)}$ for each $v\in M_L$.  Furthermore, for any $0<\delta_1<\delta_2<\mathfrak{e}(v)$, 
we have
\[\left(f^N(P)a_d^{-(d^N-1)/(d-1)}\right)^{1/d^N}\to \G_\beta\]
uniformly on $\ann{v}{\beta}{\delta_1}{\delta_2}$.
\end{lemma}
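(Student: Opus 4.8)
The plan is to turn the statement into a question about the uniform convergence of an infinite product of analytic functions, each of which is controlled by the $\mf$-adic estimates already proved in Lemma~\ref{lem:formal bottcher}. Set $\xi_N=f^N(P)\,w_\beta^{md^N}a_d^{-(d^N-1)/(d-1)}$, which by Lemma~\ref{lem:basins} is a unit of $\Ocal_{\beta,X}$ with $\xi_N(\beta)=\alpha^{d^N}$ (here $\alpha$ is the leading coefficient of the series for $P$, as in the proof of Lemma~\ref{lem:formal bottcher}). Then the quantity in the statement is $w_\beta^{-m}\xi_N^{1/d^N}$ and $\G_\beta=w_\beta^{-m}G_\infty$ with $G_N=\xi_N^{1/d^N}$ the Hensel root congruent to $\alpha$. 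The elementary identity $\xi_{N+1}/\xi_N^d=f^{N+1}(P)/(a_df^N(P)^d)=1+g_N$, with $g_N=\sum_{i=0}^{d-1}(a_i/a_d)f^N(P)^{i-d}$, telescopes to $G_N=\xi_0\prod_{k<N}(1+g_k)^{1/d^{k+1}}$ (where $\xi_0=w_\beta^mP$). So it suffices to produce, for each $v\in M_L$, a radius $\mathfrak e(v)$ --- with $\mathfrak e$ an $M_L$-divisor --- such that every $g_k$ is analytic on $\disc{v}{\beta}{\mathfrak e(v)}$ and $\sum_k|(1+g_k)^{1/d^{k+1}}-1|_v<\infty$ there; the product then converges uniformly on $\disc{v}{\beta}{\mathfrak e(v)}$ to an analytic function whose formal germ at $\beta$ is $G_\infty$ (since uniform and formal limits agree whenever both exist, as noted before the Schwarz Lemma), and multiplying by $w_\beta^{-m}$, which is bounded on each $\ann{v}{\beta}{\delta_1}{\delta_2}$, yields both assertions.

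Three ingredients feed into the bound on $g_k$. First, the poles of $f^N(P)$, over all $N$, lie in the fixed finite set of poles of $P$ and of the $a_i$ (immediate induction from $f^N(P)=f(f^{N-1}(P))$); moreover $P\in\basin_\beta^0(f)$ forces $P(t)\in\basin_v^0(f_t)$ for every $t$ in a small enough punctured disk about $\beta$ --- this is exactly where the strict inequality $\ord_\beta(a_d)<\tfrac{d-1}{2}m$, equivalently the appearance of $|a_d|_v^{-2/(d-1)}$ rather than $|a_d|_v^{-1/(d-1)}$ in the definition of $\Lambda_v$, gets used --- so Lemma~\ref{lem:basins} applied to the specialized polynomial gives $|f^N(P)(t)|_v=|P(t)|_v^{d^N}|a_d(t)|_v^{(d^N-1)/(d-1)}\mu_N(t)$ with $(1/2)^{(d^N-1)/(d-1)}\le\mu_N(t)\le(3/2)^{(d^N-1)/(d-1)}$; in particular $f^N(P)$ is zero-free on the punctured disk, and since $g_k(\beta)=\alpha^{d^{k+1}}/\alpha^{d^{k+1}}-1=0$ each $g_k$ extends analytically across $\beta$ on one fixed disk $\disc{v}{\beta}{\rho}$. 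Second, the same fibrewise formula, combined with positive lower bounds for $|P(t)|_v$ and $|a_d(t)|_v$ on the circle $|w_\beta(t)|_v=\rho$ and the maximum modulus principle, gives $\sup_{\disc{v}{\beta}{\rho}}|g_k|_v\le M(\rho)^{d^k}$ with $M(\rho)$ independent of $k$. Third, the estimate $|\xi_{N+1}-\xi_N^d|_\mf\le B|P|_\mf^{-d^N/2}$ from the proof of Lemma~\ref{lem:formal bottcher}, together with $\ord_\beta(\xi_N)=0$, gives $\ord_\beta(g_k)\ge\tfrac12 md^k-md$. Feeding the last two into the Schwarz Lemma on $\disc{v}{\beta}{\rho}$ and reading the bound off on the subdisk $|w_\beta(t)|_v\le\rho\theta_0$ produces $|g_k(t)|_v\le\theta_0^{-md}\bigl(\theta_0^{m/2}M(\rho)\bigr)^{d^k}$; choosing $\theta_0<\min(1,M(\rho)^{-2/m})$ makes $|g_k(t)|_v$ decay doubly exponentially in $k$, uniformly on the subdisk. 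Such rapid decay makes each $(1+g_k)^{1/d^{k+1}}$ analytic on the subdisk --- over non-archimedean $v$ the binomial series converges because the doubly exponential decay of $|g_k|_v$ overwhelms the denominators $|d|_v^{-(k+1)}|k!|_v^{-1}$ for $k$ large, the finitely many small $k$ being absorbed by shrinking the radius a little more using $g_k(\beta)=0$ --- and makes $|(1+g_k)^{1/d^{k+1}}-1|_v$ summable. I would set $\mathfrak e(v)=\rho\theta_0$, adjusted to a value in $v(L)$; for $v$ outside the finite set $S$ of Lemma~\ref{lem:formal bottcher} one takes $\mathfrak e(v)=1$, since there the series for $\G_\beta$ has $v$-integral coefficients (Lemma~\ref{lem:convergence}), so $\mathfrak e$ is indeed an $M_L$-divisor.

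The step I expect to be the real obstacle is the second ingredient and its interplay with the third: the fibrewise use of Lemma~\ref{lem:basins} is lossy by the factor $\mu_N(t)$, which is exponential in $N$ in the unfavourable direction, so a crude estimate would pit a geometric loss against the geometric gain $\ord_\beta(g_k)\ge\tfrac12 md^k-md$ and could fail when $m=-\ord_\beta(P)$ is small. The way around this is that one is free to shrink the radius $\rho\theta_0$ of the subdisk on which the Schwarz bound is evaluated, and that both the gain and the loss are governed by the same exponent $d^k$, so a single $\theta_0$ --- depending on $v$ only through $M(\rho)$ and $m$ --- defeats the loss for all $k$ at once. The genuine work is bookkeeping: segregating the constants that depend on $\rho$ from those that depend on $k$, and verifying that $P\in\basin_\beta^0(f)$ really does propagate to $P(t)\in\basin_v^0(f_t)$ throughout a full punctured disk, with the correct strict inequality on $\ord_\beta(a_d)$. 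Once those are in place, the convergence of the product, the identification of its germ with $G_\infty$, and the passage to the annulus statement by multiplication by $w_\beta^{-m}$ are all routine.
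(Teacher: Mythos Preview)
Your approach is correct and rests on the same three pillars as the paper's proof: the propagation of $P\in\basin_\beta^0(f)$ to $P_t\in\basin_v^0(f_t)$ on a fixed punctured disk (and you are right that this is precisely where the exponent $-2/(d-1)$ rather than $-1/(d-1)$ in $\Lambda_v$ is used), the $\mf$-adic estimates from Lemma~\ref{lem:formal bottcher}, and the Schwarz Lemma. The organization, however, is genuinely different.

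The paper works directly with the sequence $G_N=\xi_N^{1/d^N}$ rather than with your telescoping product. The point is that $|G_N(t)|_v=|\xi_N(t)|_v^{1/d^N}$, and taking the $d^N$-th root exactly cancels the growth of $|\xi_N|_v$: one gets a bound $|G_N(t)|_v\leq\mathfrak{d}(v)$ \emph{uniform in $N$} on a fixed disk (with a small modification when $\ord_\beta(a_d)>0$). Then $|G_M-G_N|_v\leq 2_v\mathfrak{d}(v)$ on that disk, and a single application of Schwarz, using only $\ord_\beta(G_M-G_N)\to\infty$, gives uniform Cauchyness on any strictly smaller disk. There is no balancing act: the ``lossy'' factor $\mu_N(t)$ that worried you contributes at most $(3/2)^{(1-d^{-N})/(d-1)}$ after the $d^N$-th root, which is bounded.

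Your route instead bounds $|g_k|_v$ and then applies Schwarz to $g_k$ itself, pitting the sup-norm bound $M(\rho)^{d^k}$ against $\ord_\beta(g_k)\geq\tfrac{m}{2}d^k-md$. This works exactly as you say, but it is the obstacle you identified, and the paper's approach sidesteps it entirely. (Incidentally, for $\rho$ small enough the fibrewise estimate already gives $|g_k|\leq C\cdot M^{d^k}$ with $M<1$ on the circle $|w_\beta|=\rho$, so your Schwarz step, while not wrong, is redundant.) What your approach buys is an explicit infinite-product formula $G_\infty=\xi_0\prod_k(1+g_k)^{1/d^{k+1}}$; what the paper's buys is a cleaner proof with no case analysis on the size of $M(\rho)$ and no separate discussion of the analyticity of fractional powers in the non-archimedean case --- your treatment of the latter via binomial series and the doubly-exponential-versus-exponential comparison is in fact more careful than the paper's one-line assertion that the analytic $d^N$-th root of $\xi_N$ exists.
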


\begin{proof}  Since $P\in \basin^0_\beta(f)$, we have
\[-(d-i)\ord_{\beta}(P)>-\ord_\beta(a_i/a_d),\]
for each $0\leq i<d$, and so we may choose a disk $\ann{v}{\beta}{0}{\epsilon}$ on which
\[|P_t|_v>2_v\left|\frac{a_i(t)}{a_d(t)}\right|_v^{1/(d-i)},\]
where as usual $2_v=2$ if $v$ is archimedean, and $1$ otherwise.  For all but finitely many places, this disk can be chosen to have radius one, since we can choose a finite set of places outside of which $P_t^{d-i}a_d(t)/a_i(t)$ is given by a power series with integral coefficients, a leading coefficient which is a unit, and a pole at zero.  Proceeding similarly with $2_v|a_d(t)|_v^{-2/(d-1)}$ and the constant function $2_v$, we see that we may choose an $M_k$-divisor $\mathfrak{e}$ such that  $P_t\in \basin_v^0(f_t)$ for all $t\in \ann{v}{\beta}{0}{\mathfrak{e}(v)}$.

 Also note that since $\ord_\beta(P)=-m$, we may assume without loss of generality that the function $Pw_\beta^m$ is analytic and bounded on the disk of radius $\mathfrak{e}(v)$, say
\[|P_tw_\beta(t)^m|_v\leq  \mathfrak{b}(v),\]
where $\mathfrak{b}$ is an $M_k$-divisor.
In particular, if we set
 \[\xi_N=w_\beta^{md^N}f^N(P)a_d^{-(d^N-1)/(d-1)}\in\Ocal_{\beta, X}^*\] as above, then for any given place $v\in M_L$,  $\xi_N$ extends to an analytic function with no zeros on $\disc{v}{\beta}{\mathfrak{e}(v)}$, and hence (since $\xi_N(\beta)=\alpha^{d^N}$) there is an analytic function $G_N$ with $G_N(\beta)=\alpha$ and $G_N^{d^N}=\xi_N$.  It is not hard to see that $G_N$ is defined by the formal power series $G_N$ in the previous proof.  If $\mathfrak{c}(v)=\frac{3}{2}$ for $v$ archimedean, and $1$ otherwise, we have for $t\in \disc{v}{\beta}{\mathfrak{e}(v)}$
 \begin{eqnarray*}
 |G_N(t)|_v&=&|\xi_N(t)|_v^{1/d^N}\\
 &=& |w_\beta(t)|_v^m|f^N_t(P_t)|_v^{1/d^N} |a_d(t)|_v^{-(d^N-1)/d^N(d-1)}\\
 &\leq&|w_\beta(t)|^m_v \left(\mathfrak{c}(v)^{(d^N-1)/(d-1)}|P_t|_v^{d^N}\right)^{1/d^N}|a_d(t)|_v^{-(d^N-1)/d^N(d-1)}\\
 &\leq & \mathfrak{b}(v) \mathfrak{c}(v)^{(1-d^{-N})/(d-1)}|a_d(t)^{-1}|_v^{(1-d^{-N})/(d-1)}\\
 &\leq & \mathfrak{b}(v) \mathfrak{c}(v)^{1/(d-1)}\max\{1, |a_d(t)^{-1}|_v\}^{1/(d-1)}.
 \end{eqnarray*}
 First, we treat the case $\ord_\beta(a_d)\leq 0$.  In this case, we have for some $M_k$-divisor $\mathfrak{d}_v$,
 $|G_N(t)|_v\leq  \mathfrak{d}_v$
 for all $t\in U$, and hence
 \[|G_M(t)-G_N(t)|_v\leq 2_v\mathfrak{d}_v.\]
Now, by the Schwarz Lemma, we have
\[|G_M(t)-G_N(t)|_v\leq |w_\beta(t)|^{\ord_\beta(G_N-G_M)}_v\frac{2_v\mathfrak{d}_v}{\mathfrak{e}(v)^{\ord_\beta(G_N-G_m)}}\]
for all $t\in\disc{v}{\beta}{\mathfrak{e}(v)}$.  In particular, if $\delta_2<\mathfrak{e}(v)$, we have
\[|G_M(t)-G_N(t)|_v\leq 2_v\mathfrak{d}_v\left(\frac{\delta_2}{\mathfrak{e}(v)}\right)^{\ord_\beta(G_N-G_M)}\]
on the disk $\disc{v}{\beta}{\delta_2}$.  We've seen that $\ord_\beta(G_N-G_M)\to\infty$ as $\min\{N, M\}\to\infty$, and so the sequence $G_N$ is uniformly Cauchy on this disk.  In particular, we have $G_N\to G_\infty$ uniformly on this domain.  

Now, consider the case where $a_d(0)=0$.  Shrinking $\mathfrak{e}$ if necessary, we may assume that $|a_d(t)^{-1}|_v\geq 1$ for all $t\in \disc{v}{\beta}{\mathfrak{e}(v)}$, and so for some $M_L$-divisor $\mathfrak{d}$, we have
\[|G_M(t)-G_N(t)|_v\leq 2_v\mathfrak{d}_v|a_d(t)|_v^{-1/(d-1)}\]
on $\disc{v}{\beta}{\mathfrak{e}(v)}$.  Applying the Schwarz Lemma to $a_d(G_N-G_M)^{(d-1)}$, we find that
\[|G_M(t)-G_N(t)|_v\leq 2_v\mathfrak{d}_v|w_\beta(t)|^{\ord_\beta(G_N-G_M)+\frac{1}{d-1}\ord_\beta(a_d)}|a_d(t)|^{-1/(d-1)}.\]
Supposing that $w_\beta^{\ord_\beta(a_d)}a_d^{-1}$ is bounded on $\disc{v}{\beta}{\mathfrak{e}(v)}$ by $\mathfrak{f}(v)$, we have for all $t$ with $|w_\beta(t)|_v< \delta_2$,
\[|G_M(t)-G_N(t)|_v\leq 2_v\mathfrak{d}_v\left(\frac{\delta_2}{\mathfrak{e}(v)}\right)^{\ord_\beta(G_N-G_M)}\mathfrak{f}(v)^{\frac{1}{d-1}}.\]
Again we see that $G_M-G_N\to 0$ uniformly as $\min\{N, M\}\to\infty$, and so $G_N\to G_\infty$ uniformly on $\disc{v}{\beta}{\delta_2}$.

Since $w_\beta$ is analytic on the annulus $\ann{v}{\beta}{\delta_1}{\delta_2}$,
for any $0<\delta_1<\delta_2$,
and since $\G_\beta=w_\beta^{-m}G_\infty$, we see that $\G_\beta$ is analytic on this annulus, and is the uniform limit of
\[w_\beta^{-m}G_N=\left(f^N(P)a_d^{-(d^N-1)/(d-1)}\right)^{1/d^N}.\]
Since $\G_\beta$ is analytic on any annulus of this form, it is analytic on all of $\ann{v}{\beta}{0}{\mathfrak{e}(v)}$.

\end{proof}

%
%


\section{Proof of Theorem~\ref{th:main}}

First of all, we dispatch the somewhat pathological case where $D=D(f, P)=0$.  Note that Theorem~\ref{th:variation} says nothing at all in this case, since $D$ is of empty support.

\begin{lemma}\label{lem:D=0}
Theorem~\ref{th:main} holds in the case where $D(f, P)=0$.
\end{lemma}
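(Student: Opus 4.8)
The plan is to exploit the fact that, when $D=D(f,P)=0$, the entire forward orbit of $P$ is trapped inside a single finite-dimensional space of functions on $X$; since we have adopted the convention $h_D\equiv 0$ in this case, the assertion reduces to the boundedness of $\h_{f_t}(P_t)$ as $t$ ranges over $X(\overline{k})$. To begin, note that $D$ is an effective divisor (Lemma~\ref{lem:basins} gives $\hl_{f,\beta}\geq 0$), so its vanishing forces $\hl_{f,\beta}(P)=0$, hence $P\notin\basin_\beta(f)$, for every $\beta\in X(\overline{k})$. Using the characterization $\basin_\beta(f)=\{z:f^N(z)\in\basin^0_\beta(f)\text{ for some }N\}$ from Lemma~\ref{lem:basins}, this propagates to $f^N(P)\notin\basin^0_\beta(f)$ for all $N\geq 0$ and all $\beta$; unwinding the definition of $\basin^0_\beta(f)$, this says $\ord_\beta(f^N(P))\geq -\mu_\beta$, where $\mu_\beta\geq 0$ is an integer depending only on $f$ and $\beta$ and vanishing for $\beta$ outside the finite set $T$ of points of bad reduction of $f$. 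Hence, with $E=\sum_{\beta\in T}\mu_\beta(\beta)$ and $V=H^0(X,\Ocal(E))$, a finite-dimensional $k$-vector space, we have $f^N(P)\in V$ for every $N$.

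Next I would fix a $k$-basis $\phi_1,\dots,\phi_r$ of $V$ and write $f^N(P)=\sum_i c_{N,i}\phi_i$ with $c_{N,i}\in k$. For all but finitely many $t$ — those outside a finite set containing the poles of the $\phi_i$ and the zeros and poles of $a_d$ — the fibre $f_t$ is a genuine degree $d$ polynomial and we have the standard relation $\h_{f_t}(P_t)=\lim_{N\to\infty}d^{-N}h(f^N_t(P_t))$. Since $f^N_t(P_t)=\sum_i c_{N,i}\phi_i(t)$, applying the ultrametric inequality at non-archimedean places and the triangle inequality at archimedean places gives
\[h(f^N_t(P_t))\ \leq\ h(\mathbf{c}_N)+h(\psi(t))+\log r,\]
where $\mathbf{c}_N=[c_{N,1}:\cdots:c_{N,r}:1]\in\PP^r(k)$, whose height is written $h(\mathbf{c}_N)$, and $\psi=[\phi_1:\cdots:\phi_r:1]$ is a fixed morphism $X\to\PP^r$. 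The crucial point is that $h(\psi(t))$ is a fixed finite quantity with no dependence on $N$, so it vanishes upon dividing by $d^N$; the proof is thus reduced to showing that $h(\mathbf{c}_N)\leq d^N C_0$ for a constant $C_0$ depending only on $f$ and $P$.

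To bound $h(\mathbf{c}_N)$ I would choose, once and for all, points $t_1,\dots,t_r\in X(\overline{k})$, avoiding the poles of the $\phi_i$ and the zeros and poles of $a_d$, at which the evaluation map $V\to\overline{k}^r$, $g\mapsto(g(t_1),\dots,g(t_r))$, is an isomorphism; such points exist because a nonzero rational function does not vanish identically on $X$. Then $\mathbf{c}_N$ is the image of $(f^N_{t_1}(P_{t_1}),\dots,f^N_{t_r}(P_{t_r}))$ under a fixed linear isomorphism, so $h(\mathbf{c}_N)\leq\sum_{l=1}^r h(f^N_{t_l}(P_{t_l}))+O(1)$; and since each $t_l$ is fixed, the usual functoriality estimate $|h(f_{t_l}(x))-d\,h(x)|\leq\gamma_{t_l}$, with $\gamma_{t_l}$ depending only on the coefficients $a_i(t_l)$, telescopes to $h(f^N_{t_l}(P_{t_l}))\leq d^N(h(P_{t_l})+\gamma_{t_l}/(d-1))$. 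This gives $h(\mathbf{c}_N)\leq d^N C_0+O(1)$ with $C_0=\sum_l(h(P_{t_l})+\gamma_{t_l}/(d-1))$; substituting back, $d^{-N}h(f^N_t(P_t))\leq C_0+d^{-N}(h(\psi(t))+O(1))$, and letting $N\to\infty$ yields $\h_{f_t}(P_t)\leq C_0$. Since $\h_{f_t}(P_t)\geq 0$ and $h_D\equiv 0$, this is exactly $\h_{f_t}(P_t)=h_D(t)+O(1)$, the finitely many excluded fibres being harmlessly absorbed into the $O(1)$. The one step that must be handled with care is precisely this uniformity bookkeeping: the auxiliary points $t_1,\dots,t_r$ and the constants $C_0$, $\gamma_{t_l}$ must be selected independently of the varying parameter $t$, so that in the final inequality every term still depending on $t$ or on $N$ is one that disappears after multiplication by $d^{-N}$.
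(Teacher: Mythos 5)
Your proof is correct, and it takes a genuinely different route from the paper. The paper splits into two cases: if $P$ is preperiodic, the conclusion is immediate; if not, it invokes Benedetto's theorem (that a non-isotrivial polynomial over a function field has no wandering points of canonical height zero) to conclude $f$ is isotrivial, conjugates to a constant family $g\in\overline{k}[z]$, pulls the divisor identity $D(g,Q)=\phi^*D(f,P)$ back along the trivializing cover to deduce $Q$ is constant, and concludes that $\h_{f_t}(P_t)=\h_g(Q)$ is literally constant. Your argument sidesteps Benedetto entirely: from $\hl_{f,\beta}(P)=0$ for all $\beta$ and the characterization of $\basin_\beta(f)$ in Lemma~\ref{lem:basins}, you deduce that the whole forward orbit of $P$ is pinned inside a fixed Riemann--Roch space $V=H^0(X,\Ocal(E))$, and then a uniform height estimate — passing through a finite set of auxiliary specializations $t_1,\dots,t_r$ that detect the coordinate vector $\mathbf{c}_N$ of $f^N(P)$ in a basis of $V$ — gives $\h_{f_t}(P_t)\leq C_0$ for all but finitely many $t$. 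What the paper's route buys is the stronger conclusion that $\h_{f_t}(P_t)$ is constant, plus a tidy structural picture; what your route buys is elementarity (no appeal to Benedetto's theorem, only Riemann--Roch and the basic functoriality of heights) and, arguably, a more robust method — the finite-dimensionality argument is the kind of thing one could hope to port to settings where an isotriviality theorem is unavailable. The places where your proof needs care are exactly the ones you flag (fixing $t_l$, $\phi_i$, and the matrix $M$ once and for all, so every $t$- or $N$-dependent quantity is killed by $d^{-N}$), and your handling of them is sound; the only cosmetic imprecision is the bound $h(\mathbf{c}_N)\leq\sum_l h(f^N_{t_l}(P_{t_l}))+O(1)$, which does hold as stated once one uses $h(M^{-1}\mathbf{y})\leq h(\mathbf{y})+h(M^{-1})+\log r$ and $h(\mathbf{y})\leq\sum_l h(y_l)$, but a reader might want this spelled out.
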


\begin{proof}
One possible case in which $D(f, P)=0$ is the case in which $P$ is \emph{preperiodic} for $f$, that is, the case where $f^m(P)=f^n(P)$ for some $m>n\geq 0$.  In this case, $f_t^m(P_t)=f^n_t(P_t)$ for all $t\in X(\overline{k})$, and so the set $\{f^N_t(P_t):N\geq 0\}$ is finite, for each $t\in X(\overline{k})$.  It follows immediately that
\[\h_{f_t}(P_t)=\lim_{N\to\infty}d^{-N}h(f_t^N(P_t))=0\] identically on $X(\overline{k})$.  The inequality \eqref{eq:main bound}, in this case, is trivial.

Suppose that $P$ is not preperiodic for $f$, but that $D(f, P)=0$, and hence $\h_f(P)=0$.  By a theorem of Benedetto \cite{benedetto}, the polynomial $f$ is \emph{(affine) isotrivial}.  Thus, there exists an affine transformation
\[\psi(z)=\alpha z+ \beta\]
with $\alpha\neq 0$, and $\alpha, \beta\in \overline{K}$, such that
$\psi\circ f\circ\psi^{-1} \in \overline{k}[z].$  In other words, there is a dominant morphism $\phi:Y\to X$ defined over $\overline{k}$,  $\alpha, \beta\in\overline{k}(Y)$, and $g\in \overline{k}[z]$ such that
\[f_{\phi(s)}(z)=\psi_s^{-1}\circ g\circ\psi_s(z).\]
If we let $Q=\psi(P\circ\phi)\in \overline{k}(Y)$, and fix any $\gamma\in Y(\overline{k})$, then
\begin{eqnarray}
\ord_\gamma(g^N(Q))&=&\ord_\gamma(\alpha (f^N(P)\circ\phi)+\beta)\nonumber\\ &\geq&\min\left\{\ord_\gamma(\alpha)+\ord_\gamma((f^N(P))\circ\phi), \ord_\gamma(\beta)\right\}\label{eq:pullback}\\
&=&\min\left\{\ord_\gamma(\alpha)+e_\gamma(\phi)\ord_{\phi(\gamma)}(f^N(P)), \ord_\gamma(\beta)\right\}\nonumber
\end{eqnarray}
with equality in \eqref{eq:pullback} if the two terms in the minimum are distinct (here $e_\gamma(\phi)$ is the ramification index of $\phi$ at $\gamma$).  If $\hl_{f, \phi(\gamma)}(P)>0$, then $\ord_{\phi(\gamma)}(f^N(P))$ decreases without bound as  $N\to\infty$.  It follows that for $N$ sufficiently large we have 
\[\ord_\gamma(g^N(Q))=\ord_\gamma(\alpha)+e_\gamma(\phi)\ord_{\phi(\gamma)}(f^N(P)),\]
and hence
\[\hl_{g, \gamma}(Q)=e_\gamma(\phi)\hl_{f, \phi(\gamma)}(P).\]
On the other hand, if $\hl_{f, \phi(\gamma)}(P)=0$, then $\ord_{\phi(\gamma)}(f^N(P))$ is bounded as $N\to\infty$, and so $\ord_\gamma(g^N(Q))$ is bounded as well; it follows that $\hl_{g, \gamma}(Q)=0$.  In other words, we have shown that
\[D(g, Q)=\sum_{\gamma\in Y(\overline{k})}\hl_{g, \gamma}(Q)(\gamma)=\sum_{\beta\in X(\overline{k})} \hl_{f, \beta}(P)\left(\sum_{\gamma\in\phi^{-1}(\beta)}e_\gamma(\phi)(\gamma)\right)=\phi^* D(f, P).\]
This is true in general, but in particular $D(f, P)=0$ implies $D(g, Q)=0$.  It is easy to see that if $g\in \overline{k}[z]$, then $D(g, Q)=Q^*(\infty)$, and so $D(g, Q)=0$ implies that $Q$ is constant.

Now, for each fixed $s\in Y(\overline{k})$, $\psi_s^{-1}:\PP^1\to\PP^1$ is a morphism of degree 1, and so
\[h\left(f^N_{\phi(s)}(P_{\phi(s)})\right)=h\left(\psi_s^{-1}\circ g^N(Q)\right)=h(g^N(Q))+O(1),\]
where the implied constant depends on $s$, but not on $N$.  Dividing by $d^N$ and letting $N\to\infty$, we have
$\h_{f_t}(P_t)=\h_g(Q)$
for all $t\in X(\overline{k})$, since $\phi$ was dominant, and so  $\h_{f_t}(P_t)$ is constant.  Since $h_D=0$, \eqref{eq:main bound} holds.
\end{proof}

We now prove a lemma which contains most of the content of Theorems~\ref{th:main} and~\ref{th:variation}.  We set up the notation as above, with $L$ a field, $v\in M_L$ some valuation, $X/L$ a smooth and projective curve, $f(z)\in L(X)[z]$, and $P\in L(X)$.  Furthermore, in light of Lemma~\ref{lem:D=0}, we will suppose that $D(f, P)\neq 0$, whereupon \[\h_f(P)=\deg(D(f, P))>0.\]  
\begin{lemma}\label{lem:main}
There is an $M_L$-divisor $\mathfrak{b}$ such that
\[\left|\hl_{f_t, v}(P_t)-\lambda_{D, v}(t)\right|\leq\log\mathfrak{b}(v)\]
for all $t\in X(\overline{L}_v)$ and all $v\in M_L$ (in particular, the difference vanishes identically at all but finitely many places).  Furthermore there is an integer $N$ such that for each $\beta\in \Supp(D)$ there is a germ $E_\beta\in \widehat{\Ocal}_{\beta, X}$, and an $M_L$-divisor $\mathfrak{e}$ such that $E_\beta$ is $v$-adic analytic on $\disc{v}{\beta}{\mathfrak{e}(v)}$, and 
\[\hl_{f_t, v}(P_t)-\lambda_{D, v}(t)=\frac{1}{d^N(d-1)}\log|E_\beta(t)|_v\]
on $\ann{v}{\beta}{0}{\mathfrak{e}(v)}$.
\end{lemma}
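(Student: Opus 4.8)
The plan is to bootstrap from the local analysis of Section~3. By Lemma~\ref{lem:D=0} we may assume $D=D(f,P)\neq 0$, and then Lemma~\ref{lem:r-r} provides an integer $N$ and a morphism $g\colon X\to\PP^1$ with $g^*(\infty)=d^N(d-1)D$; enlarging $N$, we may also arrange that $f^N(P)\in\basin_\beta^0(f)$ for every $\beta\in\Supp(D)$. Put $Q=f^N(P)$, so that $\hl_{f_t,v}(P_t)=d^{-N}\hl_{f_t,v}(Q_t)$ for all $t$, and for each $\beta\in\Supp(D)$ apply Lemmas~\ref{lem:formal bottcher} and~\ref{lem:analytic bottcher} with $Q$ in place of $P$. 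This produces $\G_\beta\in K_\beta$ which is $v$-adic analytic on some annulus $\ann{v}{\beta}{0}{\mathfrak{e}(v)}$, together with the uniform limit $\bigl(f^M(Q)\,a_d^{-(d^M-1)/(d-1)}\bigr)^{1/d^M}\to\G_\beta$ on compact sub-annuli.

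The core step is the exact formula near a fixed $\beta\in\Supp(D)$. Shrinking $\mathfrak{e}$ by Lemma~\ref{lem:analytic bottcher}, I may assume $Q_t\in\basin_v^0(f_t)$ for $t\in\ann{v}{\beta}{0}{\mathfrak{e}(v)}$; then $Q_t$ lies in the $v$-adic basin of infinity of $f_t$, so $\hl_{f_t,v}(Q_t)=\lim_M d^{-M}\log|f_t^M(Q_t)|_v$, and taking $\log|\cdot|_v$ of the uniform limit (using $d^{-M}(d^M-1)/(d-1)\to(d-1)^{-1}$) yields
\[\hl_{f_t,v}(P_t)=d^{-N}\Bigl(\log|\G_\beta(t)|_v+\tfrac{1}{d-1}\log|a_d(t)|_v\Bigr).\]
Shrinking $\mathfrak{e}$ once more (by Lemma~\ref{lem:cont} applied to $g$, whose poles lie in $\Supp(D)$, and so as to exclude from the punctured disk the finitely many zeros of $a_d$ and $g$ other than $\beta$) so that $\lambda_{D,v}(t)=d^{-N}(d-1)^{-1}\log|g(t)|_v$ there, I get
\[\hl_{f_t,v}(P_t)-\lambda_{D,v}(t)=\frac{1}{d^N(d-1)}\log\left|\frac{\G_\beta(t)^{d-1}a_d(t)}{g(t)}\right|_v,\]
so the germ $E_\beta=\G_\beta^{d-1}a_d/g\in K_\beta$ is the one wanted. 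To see $E_\beta\in\widehat{\Ocal}_{\beta,X}$, write $\G_\beta=w_\beta^{-m}G_\infty$ with $G_\infty\in\widehat{\Ocal}_{\beta,X}^{*}$ and $m=-\ord_\beta(Q)$; since $Q\in\basin_\beta^0(f)$, Lemma~\ref{lem:basins} at the valuation $\ord_\beta$ gives $\hl_{f,\beta}(Q)=-\tfrac{1}{d-1}\ord_\beta(a_d)-\ord_\beta(Q)$, while $\ord_\beta(g)$ is minus the coefficient of $\beta$ in $g^*(\infty)=d^N(d-1)D$, i.e.\ $\ord_\beta(g)=-d^N(d-1)\hl_{f,\beta}(P)=-(d-1)\hl_{f,\beta}(Q)=(d-1)\ord_\beta(Q)+\ord_\beta(a_d)=\ord_\beta(\G_\beta^{d-1}a_d)$, whence $\ord_\beta(E_\beta)=0$. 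Analyticity of $E_\beta$ on a disk follows since $\G_\beta$ is analytic on the punctured disk (Lemma~\ref{lem:analytic bottcher}) while $a_d$ and $g$ are rational, hence Laurent-analytic at $\beta$ (Lemma~\ref{lem:convergence}): the apparent zero or pole of $\G_\beta^{d-1}a_d/g$ at $\beta$ is removable, so $E_\beta$ is an analytic unit on $\disc{v}{\beta}{\mathfrak{e}(v)}$, the final $\mathfrak{e}$ being a finite minimum of $M_L$-divisors. This is the second assertion.

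For boundedness I split $X(\overline{L}_v)$ into three regions. Near each $\beta\in\Supp(D)$ the exact formula together with analyticity of the \emph{unit} $E_\beta$ bounds $|\hl_{f_t,v}(P_t)-\lambda_{D,v}(t)|$ on $\disc{v}{\beta}{\mathfrak{e}(v)}$ by an $M_L$-divisor, and for all but finitely many $v$ the series of $E_\beta$ lies in $\Ocal_{v,L}\llbracket w_\beta\rrbracket$ with unit constant term, so the difference vanishes identically there. Now let $T\subseteq X(\overline{L})$ be the finite set consisting of $\Supp(D)$ together with the poles of $P$ and of $f^N(P)$ and the zeros and poles of $a_0,\dots,a_d$. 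For $t$ outside fixed small disks about the points of $T$, Lemma~\ref{lem:cont} bounds $f_t^N(P_t)$, each $a_i(t)$, and $a_d(t)^{-1}$ $v$-adically by an $M_L$-divisor, so the $f_t$ form a $v$-adically bounded family of degree-$d$ polynomials; the elementary comparison $|\hl_{f_t,v}(z)-\max\{0,\log|z|_v\}|\leq C_v$, with $C_v$ depending only on that bound, then controls $\hl_{f_t,v}(P_t)=d^{-N}\hl_{f_t,v}(f_t^N(P_t))$, while $\lambda_{D,v}(t)=d^{-N}(d-1)^{-1}\max\{0,\log|g(t)|_v\}$ is bounded by Lemma~\ref{lem:cont}. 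Finally, near a point $\beta\in T\setminus\Supp(D)$, necessarily a point of bad reduction, one uses that $\hl_{f,\beta}(P)=0$ forces $f^M(P)\notin\basin_\beta^0(f)$ for all $M$, so $f^M(P)$ has pole order at $\beta$ bounded independently of $M$; combining the consequent bound $|f_t^M(P_t)|_v\leq C^{d^M}|w_\beta(t)|_v^{-L}$ on a fixed disk (via the maximum modulus principle on the outer annulus) with the elementary comparison --- whose error near $\beta$ grows only like $\log(1/|w_\beta(t)|_v)$ --- and choosing $M\approx\log_d\log(1/|w_\beta(t)|_v)$ in $\hl_{f_t,v}(P_t)=d^{-M}\hl_{f_t,v}(f_t^M(P_t))$, one concludes $\hl_{f_t,v}(P_t)=O(1)$ near $\beta$. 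Assembling the three regions gives the $M_L$-divisor $\mathfrak{b}$.

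I expect the third region --- boundedness of $\hl_{f_t,v}(P_t)$ near the points of $T\setminus\Supp(D)$, i.e.\ at bad reduction off $\Supp(D)$ --- to be the main obstacle, since it rests on the delicate balance between the doubly-exponential growth of the iterates and the merely logarithmic growth of the local-height discrepancy, and this must be made uniform over all $v$. By contrast the core local formula of the second paragraph, which carries the content of Theorem~\ref{th:variation}, should follow smoothly once the machinery of Section~3 is in place, and the order computation placing $E_\beta$ in $\widehat{\Ocal}_{\beta,X}$ is routine bookkeeping.
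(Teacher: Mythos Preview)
Your proposal is correct and follows essentially the same architecture as the paper's proof: the same reduction via $Q=f^N(P)$, the same exact formula $E_\beta=\G_\beta^{d-1}a_d/g$ near $\Supp(D)$ with the same order computation, and the same three-region decomposition for boundedness. The only substantive difference is in your handling of the third region (near $\beta\in T\setminus\Supp(D)$), and there your route is correct but needlessly roundabout.

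You introduce the comparison $|\hl_{f_t,v}(z)-\log^+|z|_v|\leq C_v(t)$ (whose error blows up like $\log(1/|w_\beta(t)|_v)$) and then optimize $M\approx\log_d\log(1/|w_\beta(t)|_v)$ to balance against it. None of this is needed: once you have the bound $|f_t^M(P_t)|_v\leq C^{d^M}|w_\beta(t)|_v^{-L}$ with $L$ a uniform bound on the pole orders $m_M$, simply pass to the limit
\[\hl_{f_t,v}(P_t)=\lim_{M\to\infty}d^{-M}\log^+|f_t^M(P_t)|_v\leq\lim_{M\to\infty}\Bigl(\log C+d^{-M}L\log(1/|w_\beta(t)|_v)\Bigr)=\log C,\]
which is exactly what the paper does. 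So the ``delicate balance'' you anticipate never materializes; the key input is just that $m_M=-\ord_\beta(f^M(P))$ stays bounded when $\hl_{f,\beta}(P)=0$, which you already observed. The paper also separates $v\notin S$ (where integrality of the series gives $|w_\beta^{m_M}f^M(P)|_v\leq1$ directly, without any maximum-modulus argument, yielding $C=1$) from $v\in S$ (where it uses the comparison polynomial $\Phi(X)=\sum\|a_i\|X^i$ on the boundary circle to get $C=B_v$). Your sketch does not make this split explicit, but your argument adapts to both cases.
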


\begin{proof}
Let $N$ be the integer chosen in Lemma~\ref{lem:r-r}, which we may take as the least non-negative integer with the property that \[d^N(d-1)\deg(D(f, P))\geq 2g(X)\] and $f^N(P)\in\basin_\beta^0(f)$ for all $\beta\in \Supp(D)$.  We have \[\hl_{f_t, v}(f^N(P_t))=d^N \hl_{f_t, v}(P_t)\] and (by definition) 
\[\lambda_{D(f, f^N(P)), v}(t)=d^N \lambda_{D(f, P), v}(t),\]
and so the general case clearly follows from the special case where $N=0$.  Consequently, we will suppose throughout that $N=0$.

To begin, set $Z\subseteq X(\overline{L})$ to be a finite set of points containing all of the poles of $P$ and of the $a_i$, and at each $\beta\in Z$ we fix a uniformizer $w_\beta\in L(X)$.  Recall that, for each $\beta$, we have an inclusion $L(X)\hookrightarrow L\llp w_\beta\rrp$, and we will associate functions with their images (their Laurent series).  

Choose a $\beta\in\Supp(D)$.  By Lemma~\ref{lem:analytic bottcher}, there is an $M_L$-divisor $\mathfrak{e}_\beta$ such that the formal limit $\G_\beta\in L\llp w_\beta \rrp$ of $(f^N(P)a_d^{-(d^N-1)/(d-1)})^{1/d^N}$ defines a $v$-adic analytic function on $\ann{v}{\beta}{0}{\mathfrak{e}_\beta(v)}$
with a pole of order $m=-\ord_\beta(P)$ at $\beta$. For simplicity, and since $\beta$ is fixed, we will drop the subscripts.

Now, as in the proof of Lemma~\ref{lem:analytic bottcher}, we may suppose that $\mathfrak{e}(v)$ is small enough that $P_t\in\basin_v^0(f_t)$ for all $t\in\ann{v}{\beta}{0}{\mathfrak{e}(v)}$.  By the definition of $\basin_v^0(f_t)$, and the fact that this set is closed under $f_t$, this implies $|f_t^N(P_t)|_v>1$ for all $N$, and so in particular,
\[\hl_{f_t, v}(P_t)=\lim_{N\to\infty}d^{-N}\log|f^N_t(P_t)|_v\]
for $t\in \ann{v}{\beta}{0}{\mathfrak{e}(v)}$.  On the other hand, for each $t\in \ann{v}{\beta}{0}{\mathfrak{e}(v)}$ we have by Lemma~\ref{lem:analytic bottcher}
\begin{eqnarray*}\log|\G(t)|_v&=&\log\lim_{N\to\infty}\left|f^N_t(P_t)a_d(t)^{-(d^N-1)/(d-1)}\right|_v^{1/d^N}\\
&=&\lim_{N\to\infty}d^{-N}\log|f^N_t(P_t)|_v-\frac{1}{d-1}\log|a_d(t)|_v\\
&=&\hl_{f_t, v}(P_t)-\frac{1}{d-1}\log|a_d(t)|_v.
\end{eqnarray*}
 Since $P\in\basin^0_\beta(f)$, \[(d-1)\hl_{f, \beta}(P)=(d-1)\log|P|_\beta+\log|a_d|_\beta,\] and so the function $g$ defining the local heights has a pole of this order  at $\beta$.  Shrinking $\mathfrak{e}$ again, if necessary, we suppose that $\mathfrak{e}$ is also a radius of convergence for the series defining $g$, and that $|g(t)|_v\geq 1$ for all $t\in\ann{v}{\beta}{0}{\mathfrak{e}(v)}$.
Thus, for $t$ in this domain,
\[\lambda_{D, v}(t)=(d-1)^{-1}\log|g(t)|_v,\]
whereupon
\begin{eqnarray}\label{eq:difference local heights}
\hl_{f_t, v}(P_t)-\lambda_{D, v}(t)&=&\log|\G(t)|_v+\frac{1}{d-1}\log|a_d(t)|_v-\frac{1}{d-1}\log|g(t)|_v\\
&=&\frac{1}{d-1}\log|\G(t)^{d-1}a_d(t)/g(t)|_v\nonumber
\end{eqnarray}
Now, if $E_\beta=\G^{d-1}a_d/g\in K_\beta$, then $\ord_\beta(E_\beta)=0$, and hence $E_\beta\in\widehat{\Ocal}_{\beta, X}$ has a power series representation of the form
\[E_\beta=b_0+b_1w+b_2w^2+\cdots\] 
with $b_i\in L$ and $b_0\neq 0$.  Since $\G$, $a_d$, and $g$ are $v$-adic analytic functions on $\ann{v}{\beta}{0}{\mathfrak{e}(v)}$, and $g$ has no zeros in this region, $E_\beta$ is also $v$-adic analytic.  This is the function $E_\beta$ in the statement of the lemma.

Note that, by Lemmas~\ref{lem:convergence} and \ref{lem:formal bottcher}, there is a finite set $S\subseteq M_L$ of places (containing all infinite places) such that for $v\not\in S$, we have $\G,a_d, g\in\Ocal_{v, L}\llp w\rrp$, and the leading coefficients of $\G$, $a_d$, and $g$ are in $\Ocal_{v, L}^*$.  It follows that for $v\not\in S$, we have $E_\beta\in\Ocal_{v, L}\llbracket w\rrbracket$, and $E_\beta(\beta)=b_0\in\Ocal_{v, L}^*$. 
  Now, if $v\not\in S$, if $\mathfrak{e}(v)=1$,  and if $t\in\ann{v}{\beta}{0}{\mathfrak{e}(v)}$, then
\begin{eqnarray*}
\hl_{f_t, v}(P_t)-\lambda_{D, v}(t)&=&(d-1)^{-1}\log|b_0+w(t)(b_1+b_2w(t)+\cdots)|_v\\
&=&(d-1)^{-1}\log|b_0|_v=0.
\end{eqnarray*}
At the other finitely many places, we may shrink $\mathfrak{e}(v)$ to ensure that $E_\beta$ has no zeros on $\disc{v}{\beta}{2\mathfrak{e}(v)}_v$.  This ensures that $\log|E_\beta(t)|_v$ is bounded above and below for $t\in\disc{v}{\beta}{\mathfrak{e}(v)}$, and hence so is $\hl_{f_t, v}(P_t)-\lambda_{D, v}(t)$.  So, we may choose an $M_L$-divisor $\mathfrak{d}_\beta$ such that $t\in\disc{v}{\beta}{\mathfrak{e}_\beta(v)}$ implies
\[\left|\hl_{f_t, v}(P_t)-\lambda_{D, v}(t)\right|=(d-1)^{-1}\left|\log|E_\beta(t)|_v\right|\leq \log\mathfrak{d}_{\beta}(v).\]
We construct $\mathfrak{e}_\beta$ and $\mathfrak{d}_\beta$ in this way, for each $\beta\in\Supp(D)$, and set
\[\mathfrak{d}_1=\max_{\beta\in\Supp(D)}\mathfrak{d}_\beta.\]

Now let $\beta\in Z\setminus\Supp(D)$, and again choose $\mathfrak{e}=\mathfrak{e}_\beta$ which is a global radius of convergence for all of the series representing $P$, $a_i$, and $g$ in $L\llp w_\beta\rrp$.  As above, we choose a set of places $S\subseteq M_L$ large enough that for any $v\not\in S$, $P$, the $a_i$, and $g$ are in $\Ocal_{v, L}\llp w_\beta\rrp$.  We will enlarge $S$, if necessary, to contain the finitely many places $v\in M_L$ with $\mathfrak{e}(v)\neq 1$.  Since $f^N(P)$ is a polynomial in $P$ and the $a_i$, it is clear that each of these is defined by a series in $\Ocal_{v, L}\llp w\rrp$, for $v\not\in S$, convergent within the same radius $\mathfrak{e}$.  Now, for each $N$, let $m_N=-\ord_\beta(f^N(P))$.

  First, consider $v\not\in S$, so that $\mathfrak{e}_\beta(v)=1$.  Then for $t\in \ann{v}{\beta}{0}{\mathfrak{e}(v)}$, we have
\[\left|w(t)^{m_N}f^N_t(P_t)\right|_v\leq 1,\] since $w^{m_N}f^N(P)$ is regular at $\beta$, and defined by a series in $w$ with coefficients which are integral at $v$.
Now let $\delta>0$ be a real number.  For $t\in\ann{v}{\beta}{\delta}{\mathfrak{e}(v)}$, we have
\begin{eqnarray*}
d^{-N}\log|f^N_t(P_t)|&=& d^{-N}\left(\log \left|w(t)^{-m_N}\right|_v+\log\left|w(t)^{m_N}f^N_t(P_t)\right|_v\right)\\
&\leq& d^{-N}m_N\log \delta^{-1}.
\end{eqnarray*}
But $\beta\not\in\Supp(D)$, and so by definition, we have
\[d^{-N}m_N=d^{-N}\max\left\{0, -\ord_\beta(f^N(P))\right\}\longrightarrow 0\]
as $N\to \infty$.  In particular, $\hl_{f_t, v}(P_t)=0$ whenever $t\in \ann{v}{\beta}{\delta}{\mathfrak{e}(v)}$.  But $\delta>0$ was arbitrary, and so $\hl_{f_t, v}(P_t)=0$ for all $t\in\ann{v}{\beta}{0}{\mathfrak{e}(v)}$ whenever $v\not\in S$.

We now consider the remaining finitely many places.  Fix $v$, and fix a  real number $0<\delta<\mathfrak{e}(v)$.  Shrinking $\mathfrak{e}$ if necessary, we can assume that the power series for $P$, the $a_i$, and $g$ all converge within some radius strictly greater than $\mathfrak{e}(v)$.  If $v$ is archimedean,
then the maximum modulus principle ensures that, for $t$ in $\disc{v}{\beta}{\mathfrak{e}(v)}$,
\[|w(t)^{m_N}f^N_t(P_t)|_v\leq \max_{|w(t)|_v=\mathfrak{e}(v)}\left|w(t)^{m_N}f^N_t(P_t)\right|_v=\mathfrak{e}_\beta(v)^{m_N}\max_{|w(t)|_v=\mathfrak{e}(v)}\left|f^N_t(P_t)\right|_v.\]  The same follows for non-archimedean valuations by the non-archimedean maximum modulus principle \cite[p.~318]{robert}, given that our definition of an $M_L$-divisor required that $\mathfrak{e}(v)=|\alpha|_v$ for some $\alpha\in L^*$.
Fixing $v$ for the moment, define
\[\|j\|=\max_{|w(t)|_v=\mathfrak{e}(v)}|j(t)|_v,\]
for any $j\in L(X)$ for which this maximum exists, and define $\Phi(X)\in \RR[X]$ by 
\[\Phi(X)=\sum_{i=0}^d \|a_i\| X^i.\]
Note that the triangle inequality gives
\[\|f^{N+1}(P)\|\leq \Phi(\|f^N(P)\|)\]
On the other hand, $\Phi$ is a real polynomial with non-negative coefficients, and so is non-decreasing on positive values.  Thus, for all $N$,
$\|f^{N}(P)\|\leq \Phi^N(\|P\|)$ and so,
by Lemma~\ref{lem:basins}, there is a $B_v$ (which depends on $\|P\|$) such that
\[\|f^N(P)\|\leq \Phi^N(\|P\|)\leq B_v^{d^N}\]
for all $N$.
If we suppose that $t\in \ann{v}{\beta}{\delta}{\mathfrak{e}(v)}$, then we have
\begin{eqnarray*}
|f^N_t(P_t)|_v&=&|w(t)^{-m_N}|_v|w(t)^{m_N}f^N_t(P_t)|_v\\
&\leq& \left(\frac{\mathfrak{e}_\beta(v)}{\delta}\right)^{m_N}\|f^N(P)\|\\
&\leq& \left(\frac{\mathfrak{e}_\beta(v)}{\delta}\right)^{m_N}B_v^{d^N}.
\end{eqnarray*}
Since $m_N$ is bounded as $N\rightarrow\infty$, taking logarithms and limits gives
\[\hat{\lambda}_{f_t, v}(P_t)\leq \log B_v,\]
whenever $t\in\ann{v}{\beta}{\delta}{\mathfrak{e}(v)}$.  As $\delta$ was arbitrary, and $B_v$ did not depend on $\delta$, we have
$\hat{\lambda}_{f_t, v}(z_t)\leq\log B_v$ for all $t\in\ann{v}{\beta}{0}{\mathfrak{e}(v)}$.  Let $\mathfrak{n}_\beta$ be the $M_k$-divisor with value $B_v$ at each of these places, and 1 everywhere else, so that for any $v\in M_L$, 
\[\hl_{f_t, v}(P_t)\leq \log\mathfrak{n}(v)\]
whenever $t\in\ann{v}{\beta}{0}{\mathfrak{e}(v)}$.

On the other hand, since the poles of the function $g$ are all contained in $\Supp(D)$, there is, by Lemma~\ref{lem:cont}, an $M_L$-divisor $\mathfrak{m}_1$ such that 
$\left|g(t)\right|_v\leq \mathfrak{m}_1(v)$
whenever $|w_{\beta'}(t)|_v\geq \mathfrak{e}_{\beta'}(v)$ for all $\beta'\in\Supp(D)$.  In other words, either $t\in\disc{v}{\beta'}{\mathfrak{e}_{\beta'}(v)}$, for some $\beta'\in\Supp(D)$, in which case we have
\[\left|\hat{\lambda}_{f_t, v}(z_t)-\lambda_{D, v}(t)\right|\leq \log\mathfrak{d}_{1}(v),\]
or else $\lambda_{D, v}(t)\leq \log\mathfrak{m}_1(v)$.  In the latter case, if $t\in\ann{v}{\beta}{0}{\mathfrak{e}_\beta(v)}$, then  we have shown that $\hat{\lambda}_{f_t, v}(P_t)\leq \log \mathfrak{n}_\beta(v)$.  Combining these, we have
\[\left|\hat{\lambda}_{f_t, v}(z_t)-\lambda_{D, v}(t)\right|\leq \log\mathfrak{m}_1(v)+\log\mathfrak{n}_\beta(v).\]
Constructing $\mathfrak{e}_\beta$ and $\mathfrak{n}_\beta$ as above, for each $\beta\in Z\setminus\Supp(D)$, and letting
\[\mathfrak{d}_2(v)=\max_{\beta\in Z\setminus\Supp(D)}\{\mathfrak{d}_1(v), \mathfrak{m}_1(v)\mathfrak{n}_\beta(v)\},\]
then, we have
\[\left|\hat{\lambda}_{f_t, v}(z_t)-\lambda_{D, v}(t)\right|\leq \log \mathfrak{d}_2(v)\]
whenever $t\in\disc{v}{\beta}{\mathfrak{e}_\beta(v)}$,  for \emph{some} $\beta\in Z$.

For each $\beta\in Z$ we have chosen an $M_L$-divisor $\mathfrak{e}_\beta$.  Since none of the functions $P$, $a_i$, and $g$ have any poles outside of $Z$,  we employ Lemma~\ref{lem:cont} to construct an $M_L$-divisor $\mathfrak{m}_2$ such that for each $v\in S$, the condition $|j(t)|_v>\mathfrak{m}_2(v)$ for \emph{any} of the functions $j\in\{P, a_i, g\}$, implies $|w_\beta(t)|_v<\mathfrak{e}_\beta(v)$ for some $\beta\in Z$.  For any place $v$, let $Y_v\subseteq X(\overline{L}_v)$ be the set of points $t$ such that \[\max\{|P_t|_v, |a_i(t)|_v, |g(t)_v\}\leq\mathfrak{m}_2(v).\]
Then  we have, as above, $M_L$-divisors $\mathfrak{m}_3$ and $\mathfrak{m}_4$ such that for $t\in Y_v$,
\[\hat{\lambda}_{f_t, v}(P_t)\leq \log \mathfrak{m}_3(v)\quad\text{ and }\quad\lambda_{D, v}(t)\leq \log \mathfrak{m}_4(v).\]
That is,
\[\left|\hat{\lambda}_{f_t, v}(P_t)-\lambda_{D, v}(t)\right|\leq \log\mathfrak{d}_3(v),\]
for $\mathfrak{d}_3=\mathfrak{m}_3\mathfrak{m}_4$.
On the other hand, if $t\not\in Y_v$, then $|w_\beta(t)|_v<\mathfrak{e}_\beta(v)$ for some $\beta\in Z$.  By the previous two arguments, we have
\[\left|\hat{\lambda}_{f_t, v}(P_t)-\lambda_{D, v}(t)\right|\leq \log \mathfrak{d}_i(v)\]
for $i=1$ or  $2$.  Letting $\mathfrak{d}_4=\max_{1\leq i\leq 3}\mathfrak{d}_i$, pointwise, we have
\[\left|\hat{\lambda}_{f_t, v}(P_t)-\lambda_{D, v}(t)\right|\leq \log \mathfrak{d}_4(v)\]
for all $t\in X(\overline{L}_v)$.
\end{proof}

Theorem~\ref{th:main} is an immediate consequence of the above lemma. 
\begin{proof}[Proof of Theorem~\ref{th:main}]
Let $k$, $X$, $K$, $f$, and $P$ be as in the statement of Theorem~\ref{th:main}.  Then, if $\mathfrak{b}$ be the $M_k$-divisor prescribed by Lemma~\ref{lem:main}, and $L/k$ is any finite Galois extension,  we have for any $t\in X(L)$,
\[\h_{f_t}(P_t)=\sum_{v\in M_k}\frac{[k_v:\QQ_v]}{[k:\QQ]}\left(\frac{1}{[L:k]}\sum_{\sigma\in\Gal(L/k)}\hl_{f_{t^\sigma}, v}(P_{t^\sigma})\right)\]
and
\[h_D(t)=\sum_{v\in M_k}\frac{[k_v:\QQ_v]}{[k:\QQ]}\left(\frac{1}{[L:k]}\sum_{\sigma\in\Gal(L/k)}\lambda_{D, v}(t^\sigma)\right),\]
and hence
\begin{eqnarray*}
\left|\h_{f_t}(P_t)-h_D(t)\right|
&=&\left|\sum_{v\in M_k}\frac{[k_v:\QQ_v]}{[k:\QQ]}\sum_{\sigma\in\Gal(L/k)}\frac{\hl_{f_{t^\sigma}, v}(P_{t^\sigma})-\lambda_{D, v}(t^\sigma)}{[L:k]}\right|\\
&\leq &\sum_{v\in M_k}\frac{[k_v:\QQ_v]}{[k:\QQ]}\sum_{\sigma\in\Gal(L/k)}\frac{\left|\hl_{f_{t^\sigma}, v}(P_{t^\sigma})-\lambda_{D, v}(t^\sigma)\right|}{[L:k]}\\
&\leq & \sum_{v\in M_k}\frac{[k_v:\QQ_v]}{[k:\QQ]}\sum_{\sigma\in\Gal(L/k)}\frac{\log\mathfrak{b}(v)}{[L:k]}\\
&=&\sum_{v\in M_k}\frac{[k_v:\QQ_v]}{[k:\QQ]}\log \mathfrak{b}(v).
\end{eqnarray*}
This final sum is finite, and independent of both $t$ and $L$.  Since $L/k$ was arbitrary, we have
\[\h_{f_t}(P_t)=h_D(t)+O(1),\]
for $t\in X(\overline{k})$.
\end{proof}

To prove the corollary, we use an argument due to Lang.
\begin{proof}[Proof of Corollary~\ref{cor:main}]
Let $\eta\in\Pic(X)$ have degree 1, and let $h_\eta$ be a height relative to this divisor.
By the linearity of heights,
\[h_{D(f, P)}-\h_f(P)h_\eta=h_{D(f, P)-\h_f(P)\eta}+O(1).\]
Note that the divisor $D(f, P)-\h_f(P)\eta$ has degree 0.
In general, by \cite[Proposition 5.4, p.~115]{lang}, we have
\[h_{D(f, P)-\h_f(P)\eta}=O(h_\eta^{1/2}).\]
  If $X=\PP^1$, then $D(f, P)-\h_f(P)\eta$ is linearly equivalent to the zero divisor, and so
\[h_{D(f, P)}(t)-\h_f(P)h_\eta(t)=O(1).\]
\end{proof}


\section{Proof of Theorems~\ref{th:quad polys} and~\ref{th:variation}}

We will begin by proving Theorem~\ref{th:variation}.  Theorem~\ref{th:quad polys} follows by clarifying some of the details of this proof in the special case where $f_t(z)=z^2+t$, and $P_t\in\ZZ[t]$.
\begin{proof}[Proof of Theorem~\ref{th:variation}]
Let $k$, $X$, $f$, and $P$ be as in the theorem.  By Lemma~\ref{lem:main}, there is a finite set of places $S\subseteq M_k$, containing all infinite places, such that for $v\not\in S$, we have
\[\hl_{f_t, v}(P_t)=\lambda_{D, v}(t),\]
for all $t\in X(\overline{k})$.  Again by Lemma~\ref{lem:main}, we may choose, for each $\beta\in\Supp(D)$, a germ $E_\beta\in\widehat{\Ocal}_{\beta, X}$, defined over $k$, and an $M_k$-divisor $\mathfrak{e}_\beta$ such that
\[\hl_{f_t, v}(P_t)-\lambda_{D, v}(t)=\frac{1}{d^N(d-1)}\log|E_\beta(t)|_v\]
for all $t\in\disc{v}{\beta}{\mathfrak{e}(v)}$.  We will enlarge $S$, if necessary, to ensure that $E_\beta(\beta)$ is an $S$-unit for all $\beta\in \Supp(D)$.  Let $\tilde{E}_\beta(t)=E_\beta(t)/E_\beta(\beta)$, so that $\tilde{E}_\beta$ is $v$-adic analytic on $\disc{v}{\beta}{\mathfrak{e}(v)}$, and $\tilde{E}_\beta(\beta)=1$.  Since $1^{d^N(d-1)}\equiv \tilde{E}_\beta\MOD{\mf}$, for $\mf$ the maximal ideal of $\widehat{\Ocal}_{\beta, X}$, we see (by Hensel's Lemma) that there is an $\tilde{F}_\beta\in\widehat{\Ocal}_{\beta, X}$ such that
\[\tilde{F}_\beta(t)=1+O(\mf)\qquad\text{and}\qquad \tilde{F}_\beta^{d^N(d-1)}=\tilde{E}_\beta.\]
The germ $\tilde{F}_\beta$ defines a $v$-adic analytic function on $\disc{v}{\beta}{\mathfrak{e}'(v)}$, for some $M_k$-divisor $\mathfrak{e}'$ and, shrinking $\mathfrak{e}'(v)$ at finitely many places if necessary, we may assume that $|\tilde{F}_\beta(t)|_v=1$ for all $t\in\disc{v}{\beta}{\mathfrak{e}'(v)}$, whenever $v\in S$ is non-archimedean.

If $\phi:S\to \Supp(D)$, and $t\in X(k)$ satisfies $t\in \disc{v}{\phi(v)}{\mathfrak{e}'(v)}$ for each $v\in S$, then we have
\begin{eqnarray*}
\h_{f_t}(P_t)-h_D(t)&=&\sum_{v\in M_k}\frac{[k_v:\QQ_v]}{[k:\QQ]}\left(\hl_{f_t, v}(P_t)-\lambda_{D, v}(t)\right)\\
&=&\sum_{v\in S}\frac{[k_v:\QQ_v]}{[k:\QQ]}\left(\hl_{f_t, v}(P_t)-\lambda_{D, v}(t)\right)\\
&=&\sum_{v\in S}\frac{[k_v:\QQ_v]}{[k:\QQ]}\frac{1}{d^N(d-1)}\log|E_{\phi(v)}(t)|_v\\
&=&\sum_{v\in S}\frac{[k_v:\QQ_v]}{[k:\QQ]}\left(\log|\tilde{F}_{\phi(v)}(t)|_v+\frac{1}{d^N(d-1)}\log|E_{\phi(v)}(\phi(v))|_v\right)\\
&=&\sum_{v\mid\infty}\frac{[k_v:\QQ_v]}{[k:\QQ]}\log|\tilde{F}_{\phi(v)}(t)|_v+C(\phi),
\end{eqnarray*}
where
\[C(\phi)=\frac{1}{d^N(d-1)}\sum_{v\in S}\frac{[k_v:\QQ_v]}{[k:\QQ]}\log|E_{\phi(v)}(\phi(v))|_v.\]
Since $\tilde{F}_\beta$ is $v$-adic analytic, for each $v\mid \infty$, and $\tilde{F}_\beta(t)=1$, we may shrink $\mathfrak{e}'(v)$ again to ensure that $\tilde{F}_\beta(t)\neq 0$ for $t\in\disc{v}{\beta}{\mathfrak{e}'(v)}$.  It follows that
\[F_{\beta, v}(t)=\frac{[k_v:\QQ_v]}{[k:\QQ]}\log|\tilde{F}_\beta(t)|_v\]
is real analytic on $\disc{v}{\beta}{\mathfrak{e}'(v)}\subseteq X(\overline{k}_v)$.  Since $\tilde{F}_\beta(\beta)=1$, we have $F_{\beta, v}(\beta)=0$.

  If $k_v=\RR$, then since \[\tilde{F}_\beta=1+c_1w+c_2w^2+\cdots,\] the function \[F_{\beta, v}=\frac{[k_v:\QQ_v]}{[k:\QQ]}\left(c_1w+\left(c^2-\frac{c_1^2}{2}\right)w^2+\left(c_3-c_1c_2 +\frac{c_1^3}{3}\right)w^3+\cdots\right)\] is given by a power series in $w$ with coefficients in $k$.  Similarly, if $k_v=\CC$, and the disk $|w|_v<\mathfrak{e}'(v)$ is identified with a disk in $\RR^2$ by $w=x+iy$, then $F_{\beta, v}\in k\llbracket x, y \rrbracket$.

The result is easily extended to a theorem quantified over $X(\overline{k})$.  Let $L/k$ be a Galois extension, and let  $\phi:S\times \Gal(L/k)\to\Supp(D)$.  For any $t\in X(L)$ satisfying $t^\sigma\in \disc{v}{\phi(v, \sigma)}{\mathfrak{e}'(v)}$, for all $v\in S$ and $\sigma\in\Gal(L/k)$, we have
\[\h_{f_t}(P_t)-h_{D}(t)=\frac{1}{[L:\QQ]}\sum_{\substack{\sigma\in\Gal(L/k)\\v\mid\infty}}F_{\phi(v, \sigma)}(t^\sigma)+C(\phi),\]
where
\[C(\phi)=\frac{1}{d^N(d-1)}\sum_{\substack{\sigma\in\Gal(L/k)\\ v\in S}}\frac{[k_v:\QQ_v]}{[L:\QQ]}\log|E_{\phi(v, \sigma)}(\phi(v, \sigma))|\]
by the same argument as above.

\end{proof}

\begin{remark}
It should be pointed out that, in the statement of Theorem~\ref{th:variation}, it is entirely possible that for certain $\phi:S\to\Supp(D)$, there will be no $t\in X(k)$ satisfying $t\in \disc{v}{\phi(v)}{\mathfrak{e}'(v)}$ for all $v\in S$.  In particular, suppose that $k=\QQ$, $X=\PP^1$, and that $D=(i)+(-i)$, for $i^2=-1$ (this $D$ arises, for example, when $f(z)=z^2+t$ and $P_t=t^3(t^2+1)^{-1}$).  Then for $\epsilon$ small enough, there is no $t\in\PP^1(\QQ)$ with $t\in\disc{\infty}{i}{\epsilon}$.  In this case, Theorem~\ref{th:variation} is vacuously true, but becomes non-trivial after a finite extension.

\end{remark}

\begin{proof}[Proof of Theorem~\ref{th:quad polys}]
Let $f_t(z)=z^2+t$, and let $P\in \ZZ[t]$ be a polynomial of degree at least one, and leading coefficient $\alpha$.  

  Note that $\ord_\beta(P), \ord_\beta(t)\geq 0$ for all $\beta\neq\infty=[1:0]\in\PP^1$.  It follows that $\hl_{f, \beta}(P)=0$ for all $\beta\neq\infty$.  On the other hand, 
\[\ord_\infty(P)=\deg(P)\geq 1>\frac{1}{2}=\frac{1}{2}\ord_\infty(t),\]
and consequently,
 $P\in\basin^0_\infty(f)$.  It follows at once that
\[D(f, P)=\hl_{f, \infty}(P)(\infty)=\deg(P)(\infty),\]
and so $h_D$ can be taken to be $\deg(P)h$, for $h$ the usual Weil height on $\PP^1$.  Similarly, the N\'{e}ron functions can be taken to be \[\lambda_{D, v}(x)=\max\{0, \log|x^{\deg(P)}|_v\}.\]

Now, for each non-archimedean $v\in M_\QQ$, if $|t|_v\leq 1$, then $|f^N_t(P_t)|_v\leq 1$ for all $N$, since $P$ and $f$ have integral coefficients.  In this case, we have
\[\hl_{f_t, v}(P_t)=\lambda_{D, v}(t).\]
  If, on the other hand, $|t|_v>\max\{1, |\alpha|_v^{-1}\}$ then 
we have (since the coefficients of $P$ are integral)
\[|P_t|_v=|\alpha|_v|t|_v^{\deg(P)}\geq |t|_v,\]
whence $P_t\in \basin_v^0(f_t)$.  It follows from Lemma~\ref{lem:basins} that
\[\hl_{f_t, v}(P_t)=\log|\alpha t^{\deg(P)}|_v=\lambda_{D, v}(t)+\log|\alpha|_v\]
in this case.
Thus, if we consider only $t\in X(\QQ)$ such that $|t|_v>\max\{1, |\alpha|_v^{-1}\}$ for all $v\in M_k$ with $|\alpha|_v\neq 1$ 
 it follows that
\[\h_{f_t}(P_t)-h_D(t)=\hl_{f_t, \infty}(P_t)-\lambda_{D, \infty}(t)-\log|\alpha|,\]
by the product formula.
Now, just as in the proof of Theorem~\ref{th:variation}, we see that (taking $w(t)=t^{-1}$ as a uniformizer)
\[\hl_{f_t, \infty}(P_t)-\lambda_{D, \infty}(t)=\log|\G_\infty(t) t^{-\deg(P)}|,\]
where $\G_\infty(t)\in\QQ\llbracket t^{-1}\rrbracket$ is analytic in  a punctured  neighbourhood of $\infty$, and
\[\G_\infty(t)=\alpha t^{\deg(P)}+O(t^{\deg(P)-1})\]
by construction, the leading coefficient being that of $P$.  We have
\[\G_\infty(t)t^{-\deg(P)}=\alpha+O(t^{-1}),\]
and so if we choose  a neighbourhood small enough that $\G_\infty(t)t^{-\deg(P)}\neq 0$, for all $t$ in the neighbourhood, we have
\[\log|\G_\infty(t)t^{-\deg(P)}|=\log|\alpha|+F(t^{-1}),\]
for some $F(x)\in \QQ\llbracket x\rrbracket$ with $F(0)=0$.  Thus, for all $t\in X(\QQ)$ in this real neighbourhood of $\infty$, and such that $|t|_v>\max\{1, |\alpha|_v^{-1}\}$ for all $v\in M_\QQ^0$ with $|\alpha|_v\neq 1$, we have
\[\h_{f_t}(P_t)-h_D(t)=F\left(\frac{1}{t}\right).\]
Note that the second condition
 is satisfied vacuously by all $t\in X(\QQ)$ if, for example, $P$ is monic.
\end{proof}



\end{document}